\newtheorem{lemma}{Lemma}[section]
\newtheorem{proposition}[lemma]{Proposition}
\newtheorem{theorem}[lemma]{Theorem}
\newtheorem{corollary}[lemma]{Corollary}
\newtheorem{definition}[lemma]{Definition}
\newtheorem{remark}[lemma]{Remark}
\newtheorem{question}[lemma]{Question}
\newcommand{\CC}{\mathbb C}
\newcommand{\FF}{\mathbb F}
\newcommand{\PP}{\mathbb P}
\newcommand{\QQ}{\mathbb Q}
\newcommand{\ZZ}{\mathbb Z}
\renewcommand{\cD}{\mathcal D}
\renewcommand{\cH}{\mathcal H}
\renewcommand{\cL}{\mathcal L}
\newcommand{\cM}{\mathcal M}
\newcommand{\cO}{\mathcal O}
\newcommand{\Prod}{\prod\limits}
\renewcommand{\Tilde}{\widetilde}
\newcommand{\Orth}{\mathop{\null\mathrm {O}}\nolimits}
\newcommand{\Rdiv}{\mathop{\mathrm {R.div}}\nolimits}
\newcommand{\sign}{\mathop{\mathrm {sign}}\nolimits}
\newcommand{\torsion}{\mathop{\mathrm {torsion}}\nolimits}
\newcommand{\Hom}{\mathop{\mathrm {Hom}}\nolimits}
\newcommand{\id}{\mathop{\mathrm {id}}\nolimits}
\newcommand{\NS}{\mathop{\null\mathrm {NS}}\nolimits}
\newcommand{\Num}{\mathop{\null\mathrm {Num}}\nolimits}
\newcommand{\Pic}{\mathop{\null\mathrm {Pic}}\nolimits}
\newcommand{\supp}{\mathop{\null\mathrm {Supp}}\nolimits}
\newcommand{\even}{\mathop{\mathrm {ev}}\nolimits}
\newcommand{\odd}{\mathop{\mathrm {odd}}\nolimits}
\newcommand{\Eig}{\mathop{\null\mathrm {Eig}}\nolimits}
\newcommand{\Spec}{\mathop{\null\mathrm {Spec}}\nolimits}
\newcommand{\gothG}{\mathfrak G}
\newcommand{\latt}[1]{{\langle{#1}\rangle}}
\renewcommand{\div}{\mathop{\mathrm {div}}\nolimits}
\newcommand{\Kthree}{\mathop{\mathrm {K3}}\nolimits}
\newcommand{\En}{\mathop{\mathrm {En}}\nolimits}
\newcommand{\qedsymbol}{\mbox{$\Box$}}
\newcommand{\qed}{\unskip\nobreak\hfil\penalty50\hskip1em\hbox{}\nobreak
\hfill\qedsymbol\parfillskip=0pt\finalhyphendemerits=0}
\newenvironment{proof}{\begin{ProofwCaption}{Proof}}{\end{ProofwCaption}}
\newenvironment{ProofwCaption}[1]
 {\addvspace\theorempreskipamount \noindent{\it #1.}\rm}
 {\qed \par \addvspace\theorempostskipamount}
\begin{document}

\title{Moduli of polarized Enriques surfaces}
\author{V.~Gritsenko and K.~Hulek}
\date{}
\maketitle

\section{Introduction}\label{sec:intro}

The moduli space $\cM_{\En}$ of (unpolarized) Enriques surfaces is an open subset of a $10$-dimensional orthogonal modular variety, which was shown by Kond\=o to be 
rational.  In this note we want to discuss moduli spaces of polarized and numerically polarized Enriques surfaces. A polarized Enriques surface is, of course, a pair $(S,\cL)$, where 
$S$ is an Enriques surface and $\cL\in \Pic(S)$ is an ample line bundle. By numerically polarized Enriques surface we mean a pair $(S,h)$ where $h\in \Num(S)$ is the numerical class of an ample
line bundle $\cL$.

One of the main results of this note is Theorem \ref{theo:main}:
for a given polarization $h$, i.e. an $\Orth(U\oplus E_8(-1))$-orbit of a primitive vector of positive degree in the abstract Enriques lattice $U\oplus E_8(-1)$, we construct a suitable orthogonal 
modular variety $\cM_{\En,h}$ of dimension $10$ and identify in this an open subset $\cM_{\En,h}^{a}$ whose points are in $1:1$ correspondence with isomorphism classes of 
numerically polarized Enriques surfaces with this  polarization. Moduli spaces of polarized Enriques surfaces, which exist as quasi-projective varieties by Viehweg's theory, are then given 
by an \'etale $2:1$ cover of $\cM_{\En,h}^{a}$. We ask the question when 
these covers are connected. 

The main conclusion which we derive from our description of the moduli spaces is contained in Corollaries \ref{cor:finitenumerical} and \ref{cor:finitepolarized} and can be stated as follows: 
\begin{theorem}
There are only finitely many isomorphism classes of moduli spaces of polarized and numerically polarized Enriques surfaces.
\end{theorem}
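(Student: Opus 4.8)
The plan is to deduce the statement from the explicit description of the moduli spaces provided by Theorem~\ref{theo:main}. Write $\Lambda = U\oplus E_8(-1)$ for the Enriques lattice. By that construction, for every polarization $h$ --- a primitive vector of positive degree in $\Lambda$, taken up to $\Orth(\Lambda)$ --- the variety $\mathcal{M}_{\En,h}$ is realised as an orthogonal modular variety $\widetilde{\Gamma}_h\backslash\mathcal{D}_{M_h}$, where $M_h$ has signature $(2,10)$, $\mathcal{D}_{M_h}$ is its $10$-dimensional type IV domain, and $\widetilde{\Gamma}_h\subseteq\Orth^+(M_h)$ is an explicit arithmetic group attached to $h$; moreover $\mathcal{M}^a_{\En,h}\subseteq\mathcal{M}_{\En,h}$ is Zariski-open and the moduli space of polarized Enriques surfaces is a canonical \'etale double cover of $\mathcal{M}^a_{\En,h}$. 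Since a Zariski-open subset and a canonical double cover of a variety are determined up to isomorphism by that variety, it is enough to prove that, as $h$ varies, the pairs $(M_h,\widetilde{\Gamma}_h)$ --- equivalently the varieties $\widetilde{\Gamma}_h\backslash\mathcal{D}_{M_h}$ --- fall into only finitely many isomorphism classes.

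The first and decisive step is to see that $(M_h,\widetilde{\Gamma}_h)$ depends on $h$ only through a finite invariant. Here the special geometry of Enriques surfaces makes the difference: a polarization lies in the invariant sublattice $\Lambda(2)=U(2)\oplus E_8(-2)$ of the cohomology of the K3 cover and is orthogonal to the transcendental lattice, so --- in contrast with the K3 case, where one passes to $h^{\perp}$, loses a dimension, and obtains infinitely many non-isomorphic lattices --- the period domain does not depend on $h$ at all, and the only feature of $h$ that affects the construction is the image $\bar h$ of $h$ in the finite discriminant group $\Lambda(2)^{\vee}/\Lambda(2)\cong\Lambda/2\Lambda$, of order $2^{10}$, up to the action of $\Orth(\Lambda)$. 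Thus $M_h$ runs through finitely many isometry classes (also a consequence of Nikulin's finiteness theorem, since all $M_h$ have rank $12$, fixed signature, and discriminant dividing a fixed power of $2$), and for a fixed $M_h$ the group $\widetilde{\Gamma}_h$ takes only finitely many values.

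To see the last point cleanly, observe that $\widetilde{\Gamma}_h$ is the preimage in $\Orth^+(M_h)$ of the stabilizer of $\bar h$ under the finite group $\Orth(M_h^{\vee}/M_h)$; in particular it contains the stable orthogonal group $\widetilde{\Orth}^+(M_h)$, which has finite index in $\Orth^+(M_h)$. Hence there are only finitely many possibilities for $\widetilde{\Gamma}_h$ between $\widetilde{\Orth}^+(M_h)$ and $\Orth^+(M_h)$, and combined with the finiteness of the set of isometry classes of $M_h$ this yields finitely many pairs $(M_h,\widetilde{\Gamma}_h)$, hence finitely many varieties $\mathcal{M}_{\En,h}$, finitely many open loci $\mathcal{M}^a_{\En,h}$, and finitely many of their canonical double covers --- which are Corollaries~\ref{cor:finitenumerical} and~\ref{cor:finitepolarized}.

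The main obstacle is precisely the first step: one has to extract from the proof of Theorem~\ref{theo:main} that the data $(M_h,\widetilde{\Gamma}_h)$ factor through the residue $\bar h\bmod 2$ (and its $\Orth(\Lambda)$-orbit) rather than through $h$ itself, so that an a priori infinite family of polarizations produces only finitely many moduli spaces. Once this is granted, the remaining ingredients --- the index bound for the stable orthogonal group, and the fact that passing to the \emph{numerically} polarized locus and to the \'etale double cover of polarized Enriques surfaces are canonical operations --- are routine, and finiteness follows.
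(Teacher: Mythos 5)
Your overall strategy --- reduce everything to the finiteness of the group-theoretic data by observing that $\Gamma^+_h$ is squeezed between $\widetilde\Orth^+(N)$ and $\Orth^+(N)$, whose quotient is finite --- is exactly the paper's, and that part of your argument is sound. (Two small inaccuracies there: the period domain and the lattice are in fact completely fixed, so there is no family of lattices $M_h$ to control; and the relevant finite invariant is the subgroup $\pi_M(\Orth(M,h))\le \Orth(D_M)$, i.e.\ the image of the stabilizer of $h$ itself, not the stabilizer of the residue $h\bmod 2\Lambda$ --- two polarizations with the same residue can have wildly different stabilizers, e.g.\ by Proposition \ref{proposition-c0}. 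Neither of these affects the finiteness count, which your sandwich argument delivers correctly.)

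The genuine gap is in your reduction step. You assert that ``a Zariski-open subset and a canonical double cover of a variety are determined up to isomorphism by that variety,'' and on this basis conclude that finiteness of the pairs (lattice, group) suffices. This is false: a fixed variety has infinitely many pairwise non-isomorphic Zariski-open subsets, and the moduli space of numerically polarized Enriques surfaces is the open set $\cM^a_{\En,h}=\cM_{\En,h}\setminus(\Delta_{-2,h}\cup\Delta_{-4,\even,h^{\perp}})$, where the divisor $\Delta_{-4,\even,h^{\perp}}$ a priori depends on $h$ itself and not only on $\Gamma^+_h$ --- the paper explicitly flags this in the remark following Theorem \ref{theo:main}. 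The missing argument is that $\Delta_{-4,\even,h^{\perp}}$ is always a union of irreducible components of $\Delta_{-4,\even,h}$, the preimage in $\cM_{\En,h}$ of the \emph{irreducible} hypersurface $\Delta_{-4,\even}\subset\cM_{\En}$; since that preimage has only finitely many components, each of the finitely many varieties $\cM_{\En,h}$ carries only finitely many possible open sets $\cM^a_{\En,h}$. Likewise, ``canonical double cover'' does not settle the polarized case: the cover is canonical only relative to the moduli problem indexed by $h$, and infinitely many $h$ may yield the same base variety, so one still needs the separate observation that a quasi-projective variety admits only finitely many \'etale double covers, these being classified by the finite group $H^1(\cM^a_{\En,h},\ZZ/2\ZZ)$.
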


In section \ref{sec:forms} we use automorphic forms to prove that some moduli spaces of numerically polarized Enriques surfaces have negative Kodaira dimension if the corresponding modular  group
contains sufficiently many reflections. For example, this is
true for all polarizations of degree $h^2\le 32$
(see Corollary \ref{corollary:EnriquesExamples}).
In Proposition \ref {proposition-c0} we proved that  
for infinitely many 
polarizations $h$  
the moduli space of numerically $h$-polarised
Enriques surfaces coincides with the moduli space of Enriques 
surfaces with a level-$2$ structure (see \S 2). 
It was announced  in \cite{Gri2} that the last moduli space 
is of general type. 

Moduli spaces of (polarized) Enriques surfaces have been studied by many authors, but not all of the results have appeared in the literature. We have done our best to attribute published results 
wherever possible, but some further results in sections \ref{sec:lattices} to \ref{sec:polarized} are also likely to be known to experts, although they cannot be found in the literature.  
Throughout this note we will be working over the complex numbers $\CC$, but we will also briefly comment on moduli spaces in positive characteristic. 

{\footnotesize 
{\bf Acknowledgement.}  We have profited from many discussions on moduli spaces of Enriques surfaces, in particular with I.~Dolgachev, S.~Kond\=o, E.~Loo\-ijen\-ga, D.~Mar\-ku\-she\-vich 
and S.~Mukai. 
This work was supported by Labex CEMPI in Lille. We are grateful to  G. Nebe and D. Lorch in Aachen for the MAGMA calculations which we used in the proof of 
Corollary \ref{corollary:EnriquesExamples}.
The first author gratefully acknowledges support
by grant of  the Government of the Russian Federation within the 
framework of the implementation of the 5-100 Programme Roadmap of 
the National Research University  Higher School of Economics, AG 
Laboratory.
The second author gratefully acknowledges support by grant DFG Hu 
337/6-2 and the Fund for Mathematics to the  Institute of Advanced 
Study in  Princeton, which provided excellent working conditions.
Finally we thank the referee for his/her careful reading of the article.}

\section{Enriques surfaces and 
the Torelli theorem}\label{sec:lattices}

An Enriques surface (over the complex numbers) is a regular compact complex surface $S$, i.e.\ $q(S)=h^1(S,{\mathcal O}_S)=0$, whose canonical bundle $\omega_S$ is not trivial, but has the property that it is $2$-torsion, i.e. $\omega_S^{\otimes 2}={\mathcal O}_S$. Thus its holomorphic Euler characteristic is 
$\chi(\cO_S)=1$ and  by Noether's formula its Euler number is $e(S)=12$. 

Unlike $\Kthree$ surfaces 
Enriques surfaces are 
always projective \cite[Section V.23]{BHPV} and since 
$H^{0,2}(S)=H^{2,0}(S)=H^0(\omega_S)=0$ 
all classes in $H^2(S,\ZZ)$ are algebraic, 
in particular $H^2(S,\ZZ)\cong \NS(S)$. Since the canonical bundle 
is $2$-torsion the group $H^2(S,\ZZ)$ is not-torsion free. However, 
the canonical bundle is the only torsion 
element and there is a (non-canonical) splitting $H^2(S,\mathbb Z)= 
H^2(S,\mathbb Z)_f \oplus \ZZ / 2\ZZ$ where $H^2(S,\mathbb Z)_f = 
H^2(S,\mathbb Z)/\torsion$ is a free 
module of rank $10$. The cup product, or intersection product, 
endows this with a lattice structure and one has, see \cite[Chapter 
VIII 15.1]{BHPV}
$$
H^2(S,\ZZ)_f=\Num (S) \cong U \oplus E_8(-1)
$$
where $U$ denotes the hyperbolic plane and $E_8(-1)$ is the negative definite $E_8$-lattice, i.e. it is 
negative definite, even, unimodular of rank $8$.

The condition $\omega_S^2=\cO_S$ implies the existence of an
\'etale cover $p: X \to S$ and by surface classification  $X$ is a $\Kthree$ surface. We denote the corresponding involution on $X$ by $\sigma: X \to X$.
For a $\Kthree$ surface $X$ it is well known that the intersection form on $H^2(X,\ZZ)$ is a lattice  of the form
$$
H^2(X,\ZZ)\cong 3U \oplus 2E_8(-1) =: L_{\Kthree}
$$ 
where $L_{\Kthree}$ is the so-called $\Kthree$ lattice. Under the $2:1$ cover $p: X \to S$ the intersection form is multiplied by a factor $2$
and thus 
$$
p^*(H^2(S,\ZZ)) \cong U(2) \oplus E_8(-2)=:M.
$$ 
By \cite[Theorem 1.14.4]{Nik} there is a unique embedding of the lattice  $U(2) \oplus E_8(-2)$ into the $\Kthree$ lattice $L_{\Kthree}$ and thus we may assume that
$M$ is embedded into $L_{\Kthree}$ by the embedding $(x,u) \mapsto (x,0,x,u,u)$. Whenever we refer to the sublattice $M$ of $L_{\Kthree}$ we will use this embedding. 

Consider the involution 
$$
\rho: L_{\Kthree}=3U \oplus 2E_8(-1) \to L_{\Kthree}=3U \oplus 2E_8(-1),
$$
$$
\rho(x,y,z,u,v) = (z,-y,x,v,u).
$$
Clearly $M=\Eig(\rho)^+$ is the $+1$-eigenspace of this involution. Let 
$$
N= U \oplus U(2) \oplus E_8(-2).
$$
We think of $N$ as a primitive sublattice of $L_{\Kthree}$ via the embedding $(y,z,v) \mapsto (z,y,-z,v,-v)$. Clearly
$$
N=M^{\perp}_{L_{\Kthree}}=\Eig(\rho)^-.
$$

Before we  discuss markings and periods we will recall the basics about discriminant forms of lattices.  
For every lattice $L$ its dual is defined by $L^{\vee}= \Hom(L,\ZZ)$, or equivalently 
$L^{\vee}=\{x \in L \otimes \QQ \mid (x,y) \in \ZZ, \, \mbox{for all} \, y \in L\}$.  The discriminant group of $L$ is the finite abelian group 
$$
D_L=L^{\vee}/L.
$$
If $L$ is an even lattice, then the discriminant $D_L$ carries a quadratic form with values in $\QQ/2\ZZ$ induced from the form on $L$. As usual we shall denote the group of isometries
of $L$ and $D_L$ by $\Orth(L)$ and $\Orth(D_L)$ respectively. There is a natural homomorphism $\Orth(L) \to \Orth(D_L)$ and its kernel 
$$
\widetilde \Orth(L)=\{ g\in \Orth(L) \mid g|_{L^{\vee}/L}= \id \}
$$
is called the {\em stable} orthogonal group of $L$. For the lattices $M$ and $N$ we have a natural isomorphism
$$
D_M \cong D_N 
$$
which as an abelian group is the $2$-elementary group $\FF_2^{10}$. Moreover $\Orth(D_M)=\Orth(D_N)\cong \Orth^+(\FF_2^{10})$ is the orthogonal group of {\em even} type,
whose order is $|\Orth^+(\FF_2)|=2^{21}\cdot3^5\cdot5^2\cdot7\cdot17\cdot 31$, for details see \cite[\S 1]{Kon2}, \cite[Chap. I, \S 16, Chap. II. \S 10]{Die}. 
We also know by \cite[Theorem 3.6.3]{Nik} that the homomorphisms $\pi_M: \Orth(M) \to \Orth(D_M)$ and $\pi_N: \Orth(N) \to \Orth(D_N)$ are surjective.

For future use we  also describe a different description of the  group $\Orth(N)$. For this we notice that 
\begin{equation}\label{equ:dualN}
N^{\vee}(2) \cong U \oplus U(2) \oplus E_8(-1) \cong 2U \oplus D_8(-1)
\end{equation} 
and hence
\begin{equation}\label{equ:descriptionN}
\Orth(N) \cong \Orth(N^{\vee}) \cong \Orth(N^{\vee}(2)) \cong \Orth(2U \oplus D_8(-1)).
\end{equation}  

A {\em marking} of an Enriques surface $S$ is an isometry $\varphi: H^2(S,\ZZ)_f \to U \oplus E_8(-1)$.
Every such marking, or more precisely the 
induced marking $\varphi: p^*((H^2(S,\ZZ)) \to U(2) \oplus E_8(-2)$ can be extended (not uniquely) to a 
marking $\widetilde {\varphi}: H^2(X,\ZZ) \to L_{\Kthree}$ of the $\Kthree$-cover $X$. This follows from \cite[Corollary 1.5.2]{Nik} together with the fact that $\Orth(N) \to \Orth(D_N)$
is surjective.
Moreover, we can assume that $\widetilde {\varphi}(H^2(X,\ZZ))=M \subset L_{\Kthree}$, with $M$ the primitive sublattice of $L_{\Kthree}$ as explained above.
The involution $\sigma^*$ acts trivially on $\widetilde {\varphi}(H^2(X,\ZZ))$ and by $-\id$ on its orthogonal complement. This implies that 
$\rho \circ \widetilde {\varphi} = \widetilde {\varphi} \circ \sigma^*$. We shall refer to a marking $\widetilde {\varphi}$ of $X$ with this property as an  {\em Enriques marking}.
Note that if $\widetilde {\varphi}$ and  $\widetilde {\varphi}'$ are two Enriques markings extending the same marking $\varphi$, then 
$$
\widetilde \varphi \circ (\widetilde \varphi')^{-1} |_{L_{\Kthree}^-} \in \widetilde \Orth(N).
$$

Markings allow us to define {\em period points} of Enriques surfaces:
given a marked Enriques surface $(S,\varphi)$ we consider an Enriques marking 
$\widetilde {\varphi}: H^2(X,\ZZ) \to L_{\Kthree}$
as above. The Enriques involution $\sigma$ on $X$ is non-sym\-plec\-tic, i.e. $\sigma^*(\omega_X)=-\omega_X$ and
thus $\widetilde {\varphi}(\omega_X) \in N_{\CC}$.  
The lattice $N$ is an even lattice of signture $(2,10)$ and we can associate to it the type IV domain
$$
\Omega_N=\{ [x] \in \PP(N \otimes \CC) \mid (x,x)=0, \,
(x,\bar{x}) >0 \}
$$
which has two connected components $\Omega_N=\cD_N \cup \cD_N'$. The group $\Orth(N)$ acts properly 
discontinuously on $\Omega_N$ and we denote by $\Orth^+(N)$ the index $2$ subgroup of $\Orth(N)$ with real spinor norm $1$, i.e. the subgroup which
fixes the connected components of $\Omega_N$. 
We will call the group $\Orth^+(N)$ the {\em Enriques modular group}.
Indeed $\Orth^+(N)$  has index $2$ in $\Orth(N)$, since 
the reflection with respect to a $+2$-vector in a hyperbolic plane has real spinor norm $-1$. 
After possibly composing with an isometry $(\id, -\id, \id, \id, \id)$  on $L_{\Kthree}=3U \oplus 2E_8(-1)$, which commutes with $\rho$  and interchanges the two components of $\Omega_N$, 
see \cite[Proposition VIII 20.2]{BHPV},  we may assume 
that $[\widetilde {\varphi}(\omega_X)] \in \cD_N$ is in a fixed connected component.  
For this reason we refer to $\cD_N$ as the {\em period domain of Enriques surfaces}.
Clearly, the period point depends on the choice of the
extension $\widetilde {\varphi}$, but it also depends on the choice of the marking $\phi$ itself. Since every isometry of the sublattice $M \subset L_{\Kthree}$
can be extended to an isometry of the $\Kthree$ lattice $L_{\Kthree}$ one is thus led to consider the action of the group $\Orth(N)$ on $\Omega_N$ , respectively 
$\Orth^+(N)$ on $\cD_N$.

Unlike in the case of $\Kthree$ surfaces not every point in the period domain $\cD_N$ comes from an Enriques surface. 
To describe the image of the period domain we consider all vectors $-2$-vectors $l \in N$, i.e. $l^2=-2$. For each such root we obtain a hyperplane
$$
H_l= \{ [x] \in \cD_N \mid (x,l)=0 \}.
$$
We consider the union
$$
\cH_{-2}= \cup_{l \in N,\, l^2=-2}\, H_l.
$$
It was shown by Horikawa \cite[Main Theorem]{Hor}, see also Nakamura \cite[Theorem 7.2]{Nam} and   \cite[Chapter VIII, 21.4]{BHPV} that the image of the period domain is equal to the set  
$\cD_N \setminus \cH_{-2}$.

Let us now consider the action of the group $\Orth^+(N)$ on $\cD_N$. 
This group acts properly discontinuously and the quotient 
\begin{equation}\label{equ:modular}
\cM_{\En}= \Orth^+(N) \backslash \cD_N
\end{equation}
is a $10$-dimensional quasi-projective variety. It was shown by Namikawa, cf. \cite[Theorem 2.13]{Nam}, that all $-2$-vectors in $N$ are equivalent under 
the action of  $\Orth^+(N)$. Note that, using 
(\ref{equ:descriptionN}),  this can also be deduced by standard 
methods by considering $2U\oplus D_8(-1)$,  
where we remark that $-2$ vectors in $N$ correspond to 
reflective $-4$-vectors in  $2U\oplus D_8(-1)$ with the 
additional property that they pair to an even number with any other 
vector (we will call this later an even $-4$-vector) and vice 
versa.
Hence the union $\cH_{-2}$ maps to an irreducible hypersurface $\Delta_{-2}$ in 
$\cM_{\En}$.   Let 
$$
\cM^0_{\En}= \cM_{\En} \setminus \Delta_{-2}.
$$

The global Torelli theorem for Enriques surfaces as proven by Horikawa \cite{Hor} and refined by Namikawa  in \cite{Nam} implies the following
\begin{theorem}\label{theo:globaltorelli}
There is a bijection
$$
\cM^0_{\En} \stackrel{1:1}{\longleftrightarrow} \{S \mid S \, \mbox{is an Enriques surface} \, \}/ \cong.
$$
\end{theorem}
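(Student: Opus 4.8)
The plan is to recast the classical Torelli theorem of Horikawa \cite{Hor}, in the strong form due to Namikawa \cite{Nam} (see also \cite[Chapter~VIII]{BHPV}), as a statement about the $\Orth^+(N)$-action on $\cD_N$ via the $\Kthree$ cover. First I would make the period map explicit on isomorphism classes of Enriques surfaces. To a marked Enriques surface $(S,\varphi)$ one attaches an Enriques marking $\widetilde\varphi\colon H^2(X,\ZZ)\to L_{\Kthree}$ with image $M$ and with $\rho\circ\widetilde\varphi=\widetilde\varphi\circ\sigma^*$, and sets $\varpi(S,\varphi)=[\widetilde\varphi(\omega_X)]$, which lies in the fixed component $\cD_N$ after the normalisation discussed before the statement. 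By the remark in \S\ref{sec:lattices}, replacing $\widetilde\varphi$ by another Enriques marking over the same $\varphi$ changes $\varpi(S,\varphi)$ by an element of $\widetilde\Orth(N)$; replacing $\varphi$ itself amounts to precomposing $\widetilde\varphi$ with the restriction to $N$ of an isometry of $L_{\Kthree}$ extending a given isometry of $M$, and---using that every isometry of $M$ extends to $L_{\Kthree}$ together with the surjectivity of $\Orth(N)\to\Orth(D_N)$---one checks that the total ambiguity is exactly the action of $\Orth^+(N)$. Hence $S\mapsto\varpi(S)$ is a well-defined map $\{S\}/\!\cong\;\longrightarrow\cM_{\En}=\Orth^+(N)\backslash\cD_N$, and by Horikawa's description of the period image it takes values in $\cM^0_{\En}$.

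Surjectivity onto $\cM^0_{\En}$ is then immediate: it is precisely the statement, recalled above from \cite{Hor}, \cite{Nam} and \cite[VIII, 21.4]{BHPV}, that the image of the period domain of marked Enriques surfaces is $\cD_N\setminus\cH_{-2}$, read modulo $\Orth^+(N)$.

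For injectivity, suppose $\varpi(S)=\varpi(S')$ in $\cM^0_{\En}$. Choosing the markings and their extensions appropriately, I may assume that $\widetilde\varphi(\omega_X)$ and $\widetilde\varphi'(\omega_{X'})$ span the same line in $N_\CC$. Then $\psi:=(\widetilde\varphi')^{-1}\circ\widetilde\varphi\colon H^2(X,\ZZ)\to H^2(X',\ZZ)$ is an isometry of $\Kthree$ lattices carrying $\CC\omega_X$ to $\CC\omega_{X'}$, hence a Hodge isometry, and---because both $\widetilde\varphi$ and $\widetilde\varphi'$ are Enriques markings---it satisfies $\psi\circ\sigma^*=(\sigma')^*\circ\psi$. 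After adjusting $\psi$ by $\pm\id$ and by a product of reflections in $(-2)$-classes of $X'$, chosen so as to commute with $(\sigma')^*$, I would arrange that $\psi$ maps the ample cone of $X$ onto that of $X'$; by the strong Torelli theorem for $\Kthree$ surfaces it is then induced by a unique isomorphism $f\colon X'\to X$. Uniqueness, combined with $\psi\circ\sigma^*=(\sigma')^*\circ\psi$, forces $f\circ\sigma'=\sigma\circ f$, so $f$ descends to an isomorphism $S'\cong S$. Injectivity, together with surjectivity and well-definedness, gives the bijection.

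The main obstacle is the step in which $\psi$ is made effective $(\sigma')^*$-equivariantly---equivalently, showing that the induced isometry $\Num(S)\to\Num(S')$ can be brought into the ample cone by reflecting in $(-2)$-curves of $S$. This is exactly where the hypothesis that the period lies off $\cH_{-2}$ (so that $S$ has an honest ample class and the walls of its ample cone are compatible with the Enriques structure) enters, and it is the content of Namikawa's refinement \cite{Nam} of Horikawa's theorem. Everything else is bookkeeping with the lattice $N$ and its discriminant form.
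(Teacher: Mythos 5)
Your proposal is correct and follows essentially the same route as the paper, which gives no independent proof but derives the statement directly from Horikawa \cite{Hor} and Namikawa \cite{Nam}; your sketch is a faithful unpacking of that citation. You also correctly isolate the one genuinely nontrivial step --- adjusting the Hodge isometry by a $(\sigma')^*$-equivariant Weyl group element so that it becomes effective --- and attribute it to Namikawa's refinement, which is exactly where the paper places the burden as well.
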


For this reason the variety $\cM^0_{\En}$ is often referred to as the {\em moduli space of Enriques surfaces}.

\begin{remark}
Strictly speaking it is a misnomer to speak of the moduli space of Enriques surfaces. Although this spaces parametrizes the isomorphism classes of Enriques 
surfaces, it is, at least to us, not known that it represents a moduli functor.
\end{remark}
At this point we would also like to recall the following important theorem due to Kond\=o \cite{Kon1}:

\begin{theorem}[Kond\=o]\label{theo:menrational}
The space $\cM^0_{\En}$ of Enriques surfaces is rational.
\end{theorem}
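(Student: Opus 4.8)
The plan is to transport the problem, via the global Torelli theorem (Theorem~\ref{theo:globaltorelli}), from the transcendental quotient to an explicit \emph{geometric} quotient, and then to invoke rationality results for quotients by algebraic groups. The first observation is that rationality is a birational invariant, so the open condition cutting out $\cM^0_{\En}$ inside $\cM_{\En}=\Orth^+(N)\backslash\cD_N$ is irrelevant: since $\cM^0_{\En}=\cM_{\En}\setminus\Delta_{-2}$ is dense open, removing the image $\Delta_{-2}$ of the discriminant $\cH_{-2}$ does not change the birational type, and it suffices to prove that the arithmetic quotient $\Orth^+(N)\backslash\cD_N$ is rational. The key leverage is that, by Torelli, this modular variety is birational to a \emph{moduli space of surfaces}, which one can hope to parametrize by an explicit family of projective models rather than by periods.

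Concretely, the approach -- in the spirit of Kond\=o \cite{Kon1} -- is to produce a dominant rational map $V \ratmap \cM_{\En}$ from a rational parameter space $V$ carrying the action of a connected algebraic group $G$, such that two parameters yield isomorphic Enriques surfaces if and only if they lie in a common $G$-orbit; then $\cM_{\En}$ is birational to $G\backslash V$. A natural concrete handle is the genus-one fibration structure: a generic Enriques surface $S$ carries an elliptic pencil $S\to\PP^1$ with exactly two half-fibers, whose associated Jacobian is a relatively minimal rational elliptic surface, and $S$ is recovered from this rational elliptic surface by performing two logarithmic transformations at the multiple fibers. Since rational elliptic surfaces are pencils of plane cubics modulo $\PGL_3$, an $8$-dimensional rational parameter space, while the two half-fibers contribute two further moduli (giving the correct count $\dim=8+2=10$), one obtains a $10$-dimensional rational $V$ with $G$ assembled from $\PGL_3$ and the fibration symmetries, dominating $\cM_{\En}$.

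It then remains to prove that $G\backslash V$ is rational, and here the standard toolbox enters: the no-name lemma reduces an almost-free linear action to the rationality of a lower-dimensional quotient, Miyata's theorem disposes of the unipotent part, and one appeals to known rationality of quotients of spaces of forms by $\PGL_n$ (Bogomolov--Katsylo-type results). The genuinely hard part, however, is \emph{not} this final group-theoretic rationality but the preceding geometric step. One must exhibit a parametrization dominating the \emph{generic} Enriques surface rather than a special subfamily -- for instance the Reye congruences, which a dimension count shows form only a divisor and hence cannot be used -- and one must verify that $G$-orbits match isomorphism classes exactly. This demands genuine control of the ample cone, of the classification of elliptic pencils and their half-fibers, and of the residual ambiguity in the period point, which is precisely the surface-theoretic input packaged into Theorem~\ref{theo:globaltorelli}. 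Once a clean birational model $G\backslash V$ with $V$ rational and $G$ connected is secured, rationality of $\cM^0_{\En}$ follows.
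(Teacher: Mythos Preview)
The paper does not supply a proof of this theorem; it is stated as a known result of Kond\=o with a citation to \cite{Kon1} and nothing more. There is thus no argument in the paper to compare your proposal against beyond the bare attribution.

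Your proposal is an outline of a strategy rather than a proof, and you acknowledge as much. The architecture you describe---dominate $\cM_{\En}$ by a rational $V$ carrying a connected algebraic group $G$ whose orbits coincide with isomorphism classes, then apply no-name/Miyata/Bogomolov--Katsylo---is the correct shape for this kind of result and is indeed the overall shape of Kond\=o's argument. However, the specific model you propose (rational elliptic Jacobian plus two half-fibre positions via logarithmic transforms) has a gap you name but do not close: a generic Enriques surface carries several $\Aut(S)$-inequivalent elliptic pencils, since the image of $\Aut(S)$ in $\Orth(\Num(S))$ is a proper finite-index subgroup and primitive isotropic vectors in $U\oplus E_8(-1)$ split into multiple orbits under it. Hence the map $V\to\cM_{\En}$ is generically finite of degree $>1$ by a discrete factor that is not absorbed by any connected $G$ built from $\PGL_3$ and fibration symmetries. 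Dividing out a nontrivial finite ambiguity is precisely where rationality arguments can fail, and none of the tools you list addresses it. Your appeal to Theorem~\ref{theo:globaltorelli} as ``the surface-theoretic input'' does not by itself resolve this: Torelli identifies the quotient, it does not rationalize it.

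Kond\=o's actual argument in \cite{Kon1} chooses a different geometric model, working on the K3 side and using a lattice identification (in the spirit of \eqref{equ:descriptionN}) to recognize $\Orth^+(N)\backslash\cD_N$ as a moduli space of lattice-polarized K3 surfaces admitting an explicit projective realization, for which the invariant-theoretic rationality can be carried through. The high-level plan matches yours, but the substance---the choice of model and the mechanism controlling the discrete choices---is different from and more delicate than what you have sketched.
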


We also consider the modular variety
\begin{equation}\label{equ:universalmodular}
\widetilde {\cM}_{\En}= \widetilde \Orth^+(N) \backslash \cD_N
\end{equation}
which is a finite cover of $\cM_{\En}$ with Galois group $\Orth^+(\FF_2^{10})$. Its open subset $\widetilde {\cM}^0_{\En}$
which covers $\cM^0_{\En}$ can be interpreted as the moduli space of Enriques surfaces with a level-$2$ structure 
(see \cite[\S 2]{Kon2}).
Gritsenko discussed the modular variety  $\widetilde {\cM}_{\En}=\widetilde \Orth^+(N) \backslash \cD_N$
at the Schiermonnikoog conference  \cite{Gri2}
and outlined a proof that it is of general type, which of course implies that {\it the moduli space Enriques surfaces with a level-$2$ structure  is of general type}.

\section{Moduli spaces of numerically polarized Enriques surfaces}\label{sec:numerical}

In this section we want to describe moduli spaces of numerically polarized Enriques surfaces in terms of modular varieties of orthogonal type.
We first recall that, since  $H^{2,0}(S)=0$ for an Enriques surface $S$, every 
element in $H^2(S,\ZZ)$ is represented by an algebraic class. One consequence of this is that moduli spaces of polarized Enriques surfaces are of 
dimension $10$, in contrast to the situation of $\Kthree$ surfaces, where all $\Kthree$ surfaces form a $20$-dimensional family and 
polarized $\Kthree$ surfaces have dimension $19$.

We have already seen that $\NS(S) = H^2(S,\ZZ) \cong U \oplus E_8(-1) \oplus \ZZ / 2\ZZ$. Since $S$ is regular we can  identify polarizations, i.e. ample line bundles $\mathcal L$ on $S$ with 
their first Chern classes $\tilde h:=c_1(\mathcal L) \in H^2(S,\ZZ)$. A {\em polarized Enriques surface} is a pair $(S,\tilde h)$ where $\tilde h$ represents an ample line bundle 
on $S$. We denote the numerical class defined by $\tilde h$ by $h =[\tilde h] \in \Num(S)= H^2(S,\ZZ)_f=H^2(S,\ZZ) / \torsion$. By a {\em numerically polarized Enriques 
surface} we mean a pair $(S,h)$ where $h \in \Num(S)$ comes from an ample line bundle. Clearly every numerically polarized Enriques surface comes from two polarized Enriques surfaces 
$(S,\tilde h)$ and $(S,\tilde h + K_S)$. Note that e.g. by Reider's theorem $\tilde h$ is ample if and only if $\tilde h + K_S$ is ample.

We shall first discuss moduli of numerically polarized Enriques surfaces. For this we fix a primitive element $h\in U \oplus E_8(-1) = M(1/2)$ of positive degree 
$h^2=2d>0$. 
Note  that again the situation is different from $\Kthree$ surfaces. Any two primitive vectors $h \in L_{\Kthree}$ of the same 
positive degree are equivalent under the orthogonal group 
$\Orth(L_{\Kthree})$. This fails for $h^2 \geq 4$ for the hyperbolic lattice $U \oplus E_8(-1)$.

Now, given $h$, we define the group
$$
\Orth(M(1/2),h)=\Orth(M,h)= \{g \in \Orth(M(1/2))=\Orth(M) \mid g(h)=h \}.
$$
Next, we define the group
\begin{equation}\label{equ:modulargroup}
\Gamma_h =\pi_N^{-1}(\pi_M(\Orth(M,h)))
\end{equation}
where $\pi_M$ and $\pi_N$ are the natural projections onto the finite orthogonal groups $\Orth(D(M))$ and $\Orth(D(N))$ respectively  which, as we have seen, can be identified 
canonically.
Note that $\widetilde \Orth(N) \subset \Gamma_h$ is a normal subgroup of $\Orth(N)$ of finite index and hence $\Gamma_h$ is 
an arithmetic subgroup of $\Orth(N)$. We set
$$
{\widetilde \Orth}^+(N)= \widetilde \Orth(N) \cap \Orth^+(N), \quad \Gamma^+_h = \Gamma_h\cap \Orth^+(N).
$$
Again, we note that both subgroups have index $2$, since the reflection with respect to a vector of length $2$ in the 
summand $U$ of $N$ gives an element in $\widetilde \Orth(N)$ of real spinor norm $-1$.   
The following space is crucial for us
\begin{equation}\label{equ:modulipolarized}
\cM_{\En,h}:= \Gamma^+_h \backslash \cD_N.
\end{equation}
The main point of this section is that we shall show that one can interpret an open set of 
this modular variety as a moduli space of numerically polarized
Enriques surfaces. 
Before discussing this we 
first observe that, if $h$ and $h'$ belong to the same $\Orth(M(1/2))$-orbit, then the groups 
$\Gamma^+_h$ and $\Gamma^+_{h'}$ are conjugate and hence $\cM_{\En,h} \cong \cM_{\En,h'}$. 

Note that for every primitive vector $h\in M$ we have finite covering maps
\begin{equation}\label{equ:tower}
\widetilde {\cM}_{\En} \to \cM_{\En,h} \to {\cM}_{\En}. 
\end{equation}

Recall the hypersurface $\Delta_{-2} \subset {\cM}_{\En}$ and let $\Delta_{-2,h}$ and $\widetilde {\Delta}_{-2}$ be the  pre-images of $\Delta_{-2}$ in  $\cM_{\En,h}$
and $\widetilde {\cM}_{\En}$ respectively. We set
$$
\cM^0_{\En,h}= \cM_{\En,h} \setminus \Delta_{-2,h}, \quad  \widetilde {\cM}^0_{\En} = \widetilde {\cM}_{\En} \setminus \widetilde {\Delta}_{-2}.
$$

We will show that a suitable open subset of $\cM^0_{\En,h}$ gives a moduli space of numerically polarized Enriques surfaces. Before we can 
do this, we need to recall some facts about smooth rational curves and ample divisors on Enriques surfaces. By the adjunction formula a smooth rational 
curve $C$ has self-intersection $C^2=-2$. For this reason we also refer to smooth rational curves as {\em nodal} curves.

Assume that an Enriques surface $S$ contains a smooth rational curve $C$. Then the pre-image of $C$ under the \'etale cover $p: X \to S$
is a union of two disjoint smooth rational curves $C'$ and $C''$ which are interchanged by the involution $\sigma$ and hence the class $[C'] - [C''] \in H^2(X,\ZZ)$ is in the 
$-1$-eigenspace of $\sigma^*$. Hence, given an Enriques marking  $\widetilde {\varphi}: H^2(X,\ZZ) \to L_{\Kthree}$ this defines a primitive vector $l =\widetilde {\varphi}([C'] - [C'']) \in N$ of length
$l^2=-4$.  According to \cite[Theorem 2.15]{Nam} there are two $\Orth(N)$-orbits of vectors of length $-4$ in $N$, and the same 
argument also shows this to be true for $\Orth^+(N)$ . They are distinguished by the divisor $\div(l)$,
which is defined as the positive generator of the ideal $\{(l,n) \mid n \in N \} \subset \ZZ$ and which can be either $1$ or $2$ here. 
If $(u_1,v_1)$ and $(u_2,v_2)$ are a standard basis of the summands $U$ and $U(2)$ of $N$ respectively,
then the two orbits can be represented by $l_{\odd}=u_1 - 2 v_1$ and $l_{\even}=u_2 - v_2$ whose divisors are $1$ and $2$ respectively and which, for this reason, are called {\em odd} 
and {\em even}. Note that their complements in $N$ are $l_{\odd}^{\perp}= E_8(-2) \oplus U(2) \oplus \langle 4 \rangle$ and 
$l_{\even}^{\perp}= E_8(-2) \oplus U \oplus \langle 4 \rangle$. 
It follows from \cite[Proposition 2.16]{Nam} that $l$ is of even type.
As before we consider $H_l= \{ [x] \in \cD_N \mid (x,l)=0 \}$ and the union
$$
\cH_{-4,\even}= \cup_{l^2=-4,\, l \operatorname{even}}H_l.
$$
Since the group $\Orth(N)$ acts transitively on all primitive vectors $l$ of length $l^2=-4$ of given type (\cite[Theorem 2.15]{Nam}), the collection of hyperplanes $\cH_{-4,\even}$  maps to an irreducible hypersurface $\Delta_{-4,\even}$ in  $\cM_{\En}$. Note again that the irreducibility can  also be deduced via the identification (\ref{equ:descriptionN}).

By Namikawa \cite[Proposition 6.2]{Nam} the points in $\cM^0_{\En} \cap \Delta_{-4,\even}$ parameterize those Enriques surfaces which contain a nodal curve, the so called {\em nodal} Enriques surfaces. Thus the  open set
\begin{equation}\label{equ:nonodal}
\cM_{\En}^{nn}= \cM_{\En} \setminus (\Delta_{-2} \cup \Delta_{-4,\even}) 
\end{equation}
parametrizes the {\em non-nodal}Ê Enriques surfaces.

The following lemma is standard, but we recall it for the reader's convenience.

\begin{lemma}\label{lem:ample}
Let $\cL$ be a nef line bundle on an Enriques surface $S$ with $c_1(\cL)^2 > 0$. Then $\cL$ is ample if and only if there is no nodal curve $C$ 
with $c_1(\cL).C=0$.
\end{lemma}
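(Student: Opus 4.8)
The plan is to prove the non-trivial implication by the Nakai--Moishezon criterion, so that ampleness of $\cL$ reduces to the positivity of $H:=c_1(\cL)$ against every integral curve on $S$. The forward implication is immediate, since an ample line bundle meets every curve positively, in particular every nodal curve.

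For the converse, suppose $\cL$ is nef, $H^2>0$, and no nodal curve $C$ satisfies $H.C=0$. Since $H^2>0$ is already part of the hypothesis, by Nakai--Moishezon it remains to verify $H.C>0$ for every integral curve $C\subset S$. Nefness gives $H.C\ge 0$, so I would argue by contradiction and assume $H.C=0$ for some integral curve $C$. The first step is the Hodge index theorem: as $H^2>0$, the orthogonal complement of $H$ in $\Num(S)\otimes\RR$ is negative definite; the class of $C$ is non-zero in $\Num(S)$ because $C$ is effective and non-zero, so $C^2<0$, and since $\Num(S)\cong U\oplus E_8(-1)$ is an even lattice this improves to $C^2\le -2$.

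The second step is adjunction, where the Enriques geometry enters: $K_S$ is the unique torsion class of $H^2(S,\ZZ)=\Pic(S)$, hence maps to $0$ in $\Num(S)$, so that $C.K_S=0$ and the adjunction formula becomes $2p_a(C)-2=C^2$. As $C$ is integral we have $p_a(C)=h^1(\cO_C)\ge 0$, so $C^2\ge -2$. Together with $C^2\le -2$ this forces $C^2=-2$ and $p_a(C)=0$, so $C$ is smooth and rational, i.e.\ a nodal curve with $H.C=0$ --- contradicting the hypothesis. Hence $H.C>0$ for all integral $C$, and $\cL$ is ample.

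I do not expect a genuine obstacle here; the argument is routine. The only points needing a little care are the application of the Hodge index theorem (one must know $C$ is not numerically trivial, which follows from effectivity) and the use of the evenness of $\Num(S)$ to pass from $C^2<0$ to $C^2\le -2$, which is precisely what lets the adjunction estimate pin down $C^2=-2$ and identify $C$ as nodal.
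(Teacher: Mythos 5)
Your proof is correct and follows essentially the same route as the paper: the forward direction is immediate, and the converse is Nakai--Moishezon combined with the negative definiteness of $c_1(\cL)^{\perp}$ in the hyperbolic lattice $\Num(S)$ to show that any irreducible curve with $c_1(\cL).C=0$ has $C^2<0$. The only difference is that you spell out, via evenness of the lattice and adjunction with $K_S$ numerically trivial, why $C^2<0$ forces $C$ to be a nodal curve --- a step the paper's proof leaves implicit.
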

\begin{proof}
Since $c_1^2(\cL)>0$ it follows from Riemann-Roch that $\cL$ or $\cL^{-1}$ must be effective. Since $\cL$ is nef it must be $\cL$ itself. 
To show ampleness it is enough by the Nakai-Moishezon criterion to show that $c_1(\cL).C>0$ for every irreducible curve $C$.
Again by nefness of $\cL$ the only obstruction to ampleness can thus be an irreducible curve $C$ with $c_1(\cL).C = 0$. But then $ C \in c_1(\cL)^{\perp}$ and  the orthogonal 
complement of any vector of positive degree in the hyperbolic lattice  $\Num(S) \cong U \oplus E_8(-1)$ is negative definite, which implies $C^2<0$, which in turn means that $C$ is a nodal curve
orthogonal to $c_1(\cL)$.
\end{proof}

Recall that we have fixed a primitive vector $h \in M \subset L_{\Kthree}$ where $M$ is the $+1$-eigenspace of the involution $\rho$. We fix the following set of roots in the $\Kthree$ lattice
\begin{equation}
R_h= \{\delta \in L_{\Kthree} \mid \delta^2=  -2, \delta.\rho(\delta)=0, \delta.h=0\}.
\end{equation}
Note that, since $h$ is invariant under $\rho$, the condition $\delta.h=0$ is equivalent to $(\delta + \rho(\delta)).h=0$ and implies $(\delta - \rho(\delta)).h=0$. Also note that  $\delta - \rho(\delta) \in N$
is an even vector of length $-4$ \cite[Proposition 2.16]{Nam}. By \cite[Proposition 4.5]{Nam} the interpretation of $R_h$ is the following: the classes $\delta$ and $\rho(\delta)$ correspond to 
the classes $[C']$ and $[C'']$ where $C$ is a nodal curve on $S$ and $p^{-1}(C)=C' +C''$ with the additional property that $h.p^{-1}(C)=0$.
As before we set $H_{\delta - \rho(\delta)}=\{x \in \cD_N \mid (x,\delta - \rho(\delta))=0 \}$.
Let 
\begin{equation}
\cH_{R_h}= \cup_{\delta \in R_h}H_{\delta - \rho(\delta)}. 
\end{equation}
It follows from the construction of $\Gamma_h$ and \cite[Corollary 1.5.2]{Nik} that every automorphism in $\Gamma_h$ can be extended to an isometry of the $\Kthree$ lattice $L_{\Kthree}$ in such a way that it 
fixes $h$ and commutes with the involution $\rho$. This implies that $R_h$ is mapped under $\Gamma_h$ to itself and $\cH_{R_h}$ maps to a hypersurface $\Delta_{-4,\even, h^{\perp}}$ in $\cM_{\En,h}$. 
Note that if $\Delta_{-4,\even,h}$ denotes the pre-image of $\Delta_{-4,\even}$ under the map $\cM_{\En,h} \to \cM_{\En}$, then by construction $\Delta_{-4,\even, h^{\perp}} \subset \Delta_{-4,\even,h}$. 
Geometrically this is the obvious fact that non-nodal Enriques cannot contain nodal curves orthogonal to $h$. 

 Finally we set 
\begin{equation}\label{equ:amplelocus}
\cM_{\En,h}^{a}= \cM_{\En,h} \setminus (\Delta_{-2,h} \cup \Delta_{-4,\even, h^{\perp}}). 
\end{equation}
By what we have just  said $\cM_{\En,h}^{nn} \subset \cM_{\En,h}^{a}$ is an open set.
The main result of this section is the following:
\begin{theorem}\label{theo:main}
The open subset $\cM^a_{\En,h}$ of $\cM^0_{\En,h}$ has the following property: its points 
are in $1:1$-correspondence with isomorphism classes of numerically polarized Enriques surfaces $(S,h)$. 
\end{theorem}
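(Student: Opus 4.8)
The plan is to produce a map $\Phi$ from the set of isomorphism classes of numerically polarized Enriques surfaces $(S,h_S)$ with $h_S$ in the $\Orth(U\oplus E_8(-1))$-orbit of $h$ — which is what the abbreviation $(S,h)$ means — into $\cM_{\En,h}$, and then to show that $\Phi$ is well defined with image exactly $\cM^a_{\En,h}$, injective, and surjective; the inverse of the resulting bijection is the asserted $1{:}1$ correspondence. Given $(S,h_S)$, choose a marking $\varphi\colon\Num(S)\to M(1/2)$ with $\varphi(h_S)=h$ (such $\varphi$ form a torsor under $\Orth(M,h)$), extend it to an Enriques marking $\widetilde\varphi\colon H^2(X,\ZZ)\to L_{\Kthree}$ with $[\widetilde\varphi(\omega_X)]\in\cD_N$, and put $\Phi(S,h_S)=\Gamma_h^+\cdot[\widetilde\varphi(\omega_X)]$. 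That $\Phi$ does not depend on the choices is routine bookkeeping: two extensions of a fixed $\varphi$ differ on $L_{\Kthree}^-=N$ by an element of $\widetilde\Orth(N)\subset\Gamma_h$; two markings with $\varphi(h_S)=\varphi'(h_S)=h$ differ by $g=\varphi'\varphi^{-1}\in\Orth(M,h)$, which by surjectivity of $\pi_N$ and \cite[Corollary~1.5.2]{Nik} extends to an isometry of $L_{\Kthree}$ commuting with $\rho$ whose restriction to $N$ lies in $\Gamma_h$ by the very definition of $\Gamma_h$; and an isomorphism of numerically polarized Enriques surfaces lifts to an isomorphism of the $\Kthree$-covers commuting with the involutions, carrying $\widetilde\varphi$ to another Enriques marking with the same period and polarization. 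The passage from $\Gamma_h$ to $\Gamma_h^+$ only requires composing, where necessary, with the $\rho$-commuting isometry $(\id,-\id,\id,\id,\id)$ of $L_{\Kthree}$, using the index-$2$ statements of \S\ref{sec:lattices}.

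Next one checks that $\Phi$ lands in $\cM^a_{\En,h}$ and is injective. The period avoids $\cH_{-2}$ because $S$ is an Enriques surface (Horikawa), and it avoids $\cH_{R_h}$: if it lay on $H_{\delta-\rho(\delta)}$ for some $\delta\in R_h$, then by \cite[Proposition~4.5]{Nam} the surface $S$ would contain a nodal curve $C$ with $h_S\cdot C=0$, contradicting the ampleness of $h_S$. For injectivity, if $\Phi(S,h_S)=\Phi(S',h_{S'})$, pick period representatives with $[\omega']=\gamma[\omega]$, $\gamma\in\Gamma_h^+$; by the construction of $\Gamma_h$ and \cite[Corollary~1.5.2]{Nik}, $\gamma$ extends to an isometry $\widetilde\gamma$ of $L_{\Kthree}$ commuting with $\rho$ and fixing $h$, and replacing the marking of $(S,h_S)$ by $\widetilde\gamma\circ\widetilde\varphi$ — still an Enriques marking matching $h_S$ to $h$ — we may assume $[\omega]=[\omega']$. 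Then $(\widetilde\varphi')^{-1}\circ\widetilde\varphi\colon H^2(X,\ZZ)\to H^2(X',\ZZ)$ is a Hodge isometry commuting with the Enriques involutions and sending the ample class $p^*h_S$ to the ample class $p^*h_{S'}$; by the global Torelli theorem applied to the $\Kthree$-covers it is induced by an isomorphism $X'\to X$, which commutes with the involutions and hence descends to an isomorphism between $S$ and $S'$ carrying $h_S$ to $h_{S'}$. So $(S,h_S)\cong(S',h_{S'})$.

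Surjectivity is the heart of the matter. Fix $[\omega]\in\cM^a_{\En,h}$; since $\cH_{-2}$ and $\cH_{R_h}$ are $\Gamma_h^+$-invariant we may lift to a point of $\cD_N$ with $[\omega]\notin\cH_{-2}\cup\cH_{R_h}$. By Horikawa's surjectivity theorem, $[\omega]$ is realized by an Enriques marking $\widetilde\varphi$ of some Enriques surface $S$, and $h_0:=\varphi^{-1}(h)\in\Num(S)$ is primitive with $h_0^2=2d>0$. Replacing $\widetilde\varphi$ by $-\widetilde\varphi$ if necessary — again an Enriques marking realizing the same $[\omega]$, but with $h_0$ replaced by $-h_0$ — we may assume, using Riemann--Roch and the numerical triviality of $\omega_S$, that $h_0$ lies in the positive cone $\cC_S^+$. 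Choose $w=s_{C_1}\cdots s_{C_k}$ in the Weyl group $W_S$ generated by the classes of the nodal curves of $S$, with $C_1,\dots,C_k$ irreducible nodal curves, so that $H:=w(h_0)$ is nef. Lifting each $s_{C_i}$ to $s_{C_i'}s_{C_i''}\in\Orth(H^2(X,\ZZ))$, where $p^{-1}(C_i)=C_i'+C_i''$, and multiplying, one obtains a Weyl element $w_X$ of $H^2(X,\ZZ)$ which commutes with $\sigma^*$ (since $\sigma$ interchanges $C_i'$ and $C_i''$), fixes $\omega_X$ (since the classes $C_i',C_i''$ are algebraic), and induces $w$ on $p^*\Num(S)$; hence $\widetilde\varphi\circ w_X^{-1}$ is again an Enriques marking realizing $[\omega]$, now matching $p^*H$ with $h$. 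Thus $H$ is a class in the $\Orth(U\oplus E_8(-1))$-orbit of $h$ whose period is $[\omega]$; it is nef with $H^2>0$, and since $[\omega]\notin\cH_{R_h}$, \cite[Proposition~4.5]{Nam} shows $S$ has no nodal curve orthogonal to $H$, so $H$ is ample by Lemma~\ref{lem:ample}. Therefore $(S,H)$ is a numerically polarized Enriques surface of type $h$ with $\Phi(S,H)=[\omega]$, so $\Phi$ is a bijection onto $\cM^a_{\En,h}$, as required.

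The main obstacle is exactly this last step: a marking reconstructed from an arbitrary point of $\cM^a_{\En,h}$ need not identify $h$ with an ample — or even nef — class, and what rescues the argument is the observation that reflections in nodal curves of $S$ can be absorbed into a change of Enriques marking without moving the point of $\cM_{\En,h}$, because they lift to isometries of $L_{\Kthree}$ whose restriction to $N$ is the reflection in the algebraic $(-4)$-class $[C']-[C'']$ and therefore fixes the period. Combined with Lemma~\ref{lem:ample} and Namikawa's interpretation of $\cH_{R_h}$, this forces the (unique) nef representative of the relevant $W_S$-orbit to be ample precisely over $\cM^a_{\En,h}$. The remaining delicate points are the precise form of the Torelli theorem — one must carry the polarization through it — and the spinor-norm bookkeeping relating $\Orth(N)$ to $\Orth^+(N)$.
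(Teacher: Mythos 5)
Your argument is correct, and the first two thirds (well-definedness of the period map via extensions of markings and the definition of $\Gamma_h$, and injectivity via the strong Torelli theorem applied to the $\Kthree$-covers with the ample class $p^*h_S$ carried through) coincide with the paper's proof. The genuine difference is in the surjectivity step. The paper starts from the surjectivity of the period map for $\Kthree$ surfaces: it produces a pair $(X,\cL)$ with $\cL$ semi-ample, transports $\rho$ to a Hodge isometry $\iota$ of $H^2(X,\ZZ)$, and then must \emph{construct} the Enriques involution by conjugating $\iota$ with an element $w$ of the Weyl group of $X$ chosen, via Beauville's transitivity of the subgroup $W_{X,h}$ generated by reflections in roots orthogonal to $h$, so that $w(c_1(\cL))=c_1(\cL)$; only then does it quotient and invoke Lemma \ref{lem:ample}. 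You instead black-box Horikawa's surjectivity for \emph{unpolarized} Enriques surfaces (the statement from \S\ref{sec:lattices} that the image of the period domain is $\cD_N\setminus\cH_{-2}$), which hands you the surface $S$ for free, and then repair the polarization rather than the involution: you move $h_0=\varphi^{-1}(h)$ into the nef cone by an element of the Weyl group of $S$ generated by nodal classes, and observe that each reflection $s_C$ lifts to $s_{C'}s_{C''}$ on the cover, which commutes with $\sigma^*$, restricts on $N$ to the reflection in the even $(-4)$-class $[C']-[C'']$, and fixes $\omega_X$, so the period point is literally unchanged under the corresponding change of Enriques marking. Both routes end by combining $\omega\notin\cH_{R_h}$ with Lemma \ref{lem:ample} to upgrade nef to ample. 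What your version buys is that it avoids reproving the existence of a fixed-point-free involution (that work is absorbed into the cited Horikawa/Namikawa theorem); what the paper's version buys is a self-contained construction on the $\Kthree$ side that also exhibits the two lifts of $\iota$ to the line bundle, giving the pair $\cM$, $\cM\otimes\omega_S$, and adapts directly to the semi-polarized boundary case mentioned in the subsequent remark. The only points you should make explicit if you write this up are the standard facts that the nef cone is a fundamental domain for the nodal Weyl group on the positive cone of $\Num(S)$ and that $-\id_N$ lies in $\widetilde\Orth^+(N)$, both of which are true and consistent with what the paper uses elsewhere.
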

\begin{proof}
We start with a pair $(S,H)$ and choose a {\em marked polarization}, i.e. a polarization 
$\varphi: H^2(S,\ZZ)_f \to M(1/2)$ with $\varphi(H)=h$, which we extend to an Enriques marking on the 
$\Kthree$ double cover
$\widetilde{\varphi}: H^2(X,\ZZ) \to L_{\Kthree}$. We then associate to $(S,H)$ the class of the period point 
$\widetilde{\varphi}(\omega_X)$ in $\cM^0_{\En,h}$. We must first show that this is well defined.
Two different  Enriques markings extending $\varphi$ differ by an element in $\widetilde{\Orth}(N)$. As this
is a subgroup of $\Gamma_h$ this defines the same point in $\cM^0_{\En,h}$. Next we have to consider the case where 
we have a different polarized marking $\varphi': H^2(S,\ZZ)_f \to M(1/2)$ with $\varphi'(H)=h$.
Let $\widetilde{\varphi}$ and $\widetilde{\varphi}'$ be Enriques markings extending $\varphi$ and $\varphi'$. Then 
$\widetilde{\varphi} \circ \widetilde{\varphi}'^{-1}|_N \in \Gamma_h$ by the definition of 
the group $\Gamma_h$, and thus the map is well defined. 

Clearly this map send $(S,H)$ and $(S,H+K_S)$ to the same point in $\cM^a_{\En,h}$. Next we want to show that 
these are the only points which are identified. 
Let $(S,H)$ and $(S',H')$ be two polarized Enriques surfaces defining the same point in
$\cM^a_{\En,h}$. We want to show that $(S',H')\cong (S,H)$ or $(S',H')\cong (S,H+K_S)$. For this we consider
the $\Kthree$ covers $(X,H)$ and $(X',H')$ together with polarized Enriques markings $\widetilde{\varphi}$ and 
$\widetilde{\varphi}'$ 
respectively. Let $\psi\in \Gamma_h$ be an automorphism with
$\psi(\widetilde{\varphi}(\omega_X))=\widetilde{\varphi}'(\omega_X')$. By definition of the group $\Gamma_h$ we can
extend $\psi$ to an isometry $\widetilde\psi \in \Orth(L_{\Kthree})$ with $\widetilde\psi(h)=h$. 
Since $\widetilde\psi$ respects the subspaces $M$ and $N$ it follows that it commutes with $\rho$. Hence
$\eta=(\widetilde{\varphi}')^{-1} \circ \widetilde\psi \circ \widetilde{\varphi}: H^2(X,\ZZ) \to  H^2(X',\ZZ)$
is a Hodge isometry with the additional properties that it commutes with the Enriques involutions, i.e. 
$\eta \circ {\sigma}^* =(\sigma')^* \circ \eta$ and that $\eta(H)=H'$. Since $H$ and $H'$ are ample we can
apply the strong Torelli theorem to conclude that there is an isomorphism $f: X' \cong X$ with 
$\eta=f^*$ and $f^*(H)=H'$.
Since moreover $f$ commutes with the Enriques involutions on $X$ and $X'$ it descends to an isomorphism 
$g: S' \to S$ with $g^*(H)=H'$ or $g^*(H)=H'+K_{S'}$.  

It remains to prove that every point in $\cM_{\En,h}^{a}$ comes from a polarized Enriques surface. By the surjectivity of the period map for $\Kthree$-surfaces we can assume that there
is a pair$(X,\cL)$ where $X$ is a $\Kthree$-surface and $\cL$ a semi-ample line bundle, together with a marking
$\widetilde {\varphi}: H^2(X,\ZZ) \to L_{\Kthree}$ such that $\widetilde {\varphi}(\omega_X)=\omega$. 
Via the marking $\widetilde {\varphi}$ the involution $\rho$ induces an involution $\iota$ on $H^2(X,\ZZ)$ which is a Hodge isolmetry. We want to argue that $\iota=\sigma^*$  
for an Enriques involution  $\sigma: X \to X$ which has the additional property that it fixes $c_1(\cL)$. For this we argue similar to 
the proof of \cite[Theorem 3.13]{Nam}. The idea is to find an element  $w \in W_X$ in the Weyl group of $X$ such that $w \circ \iota \circ w^{-1}$ is
an effective Hodge isometry. The main point is to find such a $w$ with the additional property that $w(c_1(\cL))=c_1(\cL)$. That this can be done 
follows from the fact that the subgroup $W_{X.h}$ of $W_X$ generated by reflection by roots orthogonal to $h$ acts transitively on the 
chambers of the positive cone, see \cite[p. 151]{Beau}. But then we can argue as in the proof of \cite[Theorem 7.2]{Nam}: the involution $w \circ \iota \circ w^{-1}$ 
is induced by an involution on $X$ which can be shown to have no fixed points, i.e. is an Enriques involution. Hence the quotient of 
$(X,\cL)$ is a  pair $(S,\cM)$ where $S$ is an Enriques surface and $\cM$ is a nef line bundle. (The involution $\iota$ can be lifted in two ways to an 
involution of the line bundle $\cL$, whose quotients give rise to $\cM$ and $\cM \otimes \omega_S$ respectively). The fact that $\cM$ is ample 
now follows from Lemma \ref{lem:ample} since $\omega \notin \cH_{R_h}$ implies that there are no nodal curves on which $\cM$ has degree $0$.
\end{proof}

\begin{remark}
The points on the hypersurface $\Delta_{-4,\even, h^{\perp}} \setminus \Delta_{-2}$ are in $1:1$ correspondence with numerically {\em semi-polarized} Enriques surfaces, where semi-polarization as 
usual means that the line bundle is nef but not ample. The argument is as in \cite[Section 5]{HP}, see also \cite{Beau}.
\end{remark}

\begin{remark}
Note that the variety $ \cM_{\En,h}$ and the hypersuface $\Delta_{-4,\even, h}$ contained in it  only depend on the finite subgroup $\pi_M(\Orth(M,h))$ in $\Orth(D(M))$.
The hypersurface $\Delta_{-4,\even, h^{\perp}}$ on the other hand  a priori depends on $h$ itself. The difference between  $\Delta_{-4,\even, h^{\perp}}$  and $\Delta_{-4,\even, h}$ 
is that $\Delta_{-4,\even, h^{\perp}}$ contains only  some of the components of $\Delta_{-4,\even, h}$.  
\end{remark}

One corollary from this is the following finiteness result:
\begin{corollary}\label{cor:finitenumerical}
There are only finitely many different birational and isomorphism classes of moduli spaces of numerically polarized Enriques surfaces. 
\end{corollary}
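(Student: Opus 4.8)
The plan is to use the Remark following Theorem~\ref{theo:main}: the modular variety $\cM_{\En,h}$ together with the hypersurfaces $\Delta_{-2,h}$ and $\Delta_{-4,\even,h}$ depends only on the finite subgroup
$$
G_h:=\pi_M(\Orth(M,h))\subseteq\Orth(D_M).
$$
Concretely, under the canonical identification $\Orth(D_M)=\Orth(D_N)$ one has $\Gamma_h=\pi_N^{-1}(G_h)$ and $\Gamma^+_h=\Gamma_h\cap\Orth^+(N)$, so the group $\Gamma^+_h$, the quotient $\cM_{\En,h}=\Gamma^+_h\backslash\cD_N$, and the sub-hypersurfaces obtained by pulling back $\Delta_{-2}$ and $\Delta_{-4,\even}$ from $\cM_{\En}$ are literally unchanged if $h$ is replaced by any primitive $h'$ of positive degree with $G_{h'}=G_h$.

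First I would observe that $\Orth(D_M)\cong\Orth^+(\FF_2^{10})$ is a finite group, hence has only finitely many subgroups. Therefore, as $h$ ranges over all primitive vectors of positive degree in $M(1/2)$, the pair $(\cM_{\En,h},\,\Delta_{-2,h}\cup\Delta_{-4,\even,h})$ realizes only finitely many isomorphism classes. Since $\cM^a_{\En,h}$ is a dense open subset of $\cM_{\En,h}$, this already yields the assertion about birational classes.

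For the isomorphism classes one extra step is needed, because the moduli space is $\cM^a_{\En,h}=\cM_{\En,h}\setminus(\Delta_{-2,h}\cup\Delta_{-4,\even,h^\perp})$, and the hypersurface $\Delta_{-4,\even,h^\perp}$ a priori depends on $h$ itself and not only on $G_h$. Here I would invoke the fact, recorded in Section~\ref{sec:numerical}, that $\Delta_{-4,\even,h^\perp}$ is a union of some of the irreducible components of $\Delta_{-4,\even,h}$. For each of the finitely many varieties $\cM_{\En,h}$ that occur, $\Delta_{-4,\even,h}$ is a closed subvariety of a variety of finite type and hence has only finitely many irreducible components (in fact at most $[\Orth^+(N):\Gamma^+_h]$ of them, a divisor of $|\Orth^+(\FF_2^{10})|$, since $\Delta_{-4,\even,h}$ is the preimage of the irreducible hypersurface $\Delta_{-4,\even}$ under the finite map $\cM_{\En,h}\to\cM_{\En}$). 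Thus there are only finitely many possibilities for $\Delta_{-4,\even,h^\perp}$, and consequently only finitely many isomorphism classes of open varieties $\cM^a_{\En,h}$, which by Theorem~\ref{theo:main} are exactly the moduli spaces of numerically polarized Enriques surfaces.

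The only mild obstacle is this last step: one has to keep track of the fact that $\cM^a_{\En,h}$ is cut out using $\Delta_{-4,\even,h^\perp}$ rather than the purely $G_h$-determined $\Delta_{-4,\even,h}$, and bound the number of admissible unions of components of $\Delta_{-4,\even,h}$. Everything else follows immediately from the finiteness of $\Orth^+(\FF_2^{10})$.
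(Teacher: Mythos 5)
Your argument is correct and is essentially identical to the paper's own proof: finiteness of $\Orth(D_M)\cong\Orth^+(\FF_2^{10})$ gives finitely many groups $\Gamma^+_h$ and hence finitely many varieties $\cM_{\En,h}$ up to isomorphism (yielding the birational statement), and the isomorphism statement follows because $\Delta_{-4,\even,h^\perp}$ is a union of the finitely many irreducible components of $\Delta_{-4,\even,h}$. The extra care you take in bounding the number of components is a welcome elaboration but does not change the route.
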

\begin{proof}
By Theorem \ref{theo:main} every such moduli space is birational  to a variety $\cM_{\En,h}$,
which in turn only depends on a subgroup in $\Orth(D(M))$. Since this is a finite group the result follows.  However we can say more. Since  $\Delta_{-4,\even, h}$ only has finitely many components
there are only finitely many possibilities for moduli spaces of polarized Enriques surfaces $\cM^{nn}_{\En,h} \subset  \cM^a_{\En,h} \subset \cM^0_{\En,h}$ and thus we also obtain the statement 
about the isomorphism classes. 
\end{proof}

\section{Moduli spaces of polarized Enriques surfaces}\label{sec:polarized}
In this section we want to discuss moduli spaces of polarized Enriques surfaces, i.e. pairs $(S,\cL)$ where $S$ is an Enriques surfaces and $\cL$ is an ample line bundle.
We fix the $\Orth(M)$-orbit  of the numerical polarization defined by $c_1(\cL)$. By Viehweg's theory \cite[Theorem 1.13]{Vie} there exists a quasi-projective moduli space $\widehat{\cM}^a_{\En,h}$ 
for these pairs.
\begin{proposition}\label{prop:map}
There exists an \'etale $2:1$ morphism $\widehat{\cM}^a_{\En,h} \to \cM^{{}a}_{\En,h}$.
\end{proposition}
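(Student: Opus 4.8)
The plan is to exhibit the morphism $\widehat{\cM}^a_{\En,h}\to\cM^a_{\En,h}$ that forgets a polarization down to its numerical class, and to show it is the quotient by a free involution. On points it should send $(S,\cL)$ to the pair $(S,h')$, where $h'\in\Num(S)$ is the class of $c_1(\cL)$; by Theorem~\ref{theo:main} the target $\cM^a_{\En,h}$ is precisely the variety whose points are the numerically polarized Enriques surfaces. To see this is a morphism I would choose a finite Galois cover $T\to\widehat{\cM}^a_{\En,h}$ carrying a universal pair $(\cS,\cL)\to T$, form the associated family of $\Kthree$ double covers $\cX\to T$ together with its local Enriques markings, and take the period map $T\to\cD_N$, which is algebraic and equivariant for the Galois group of $T/\widehat{\cM}^a_{\En,h}$; by the construction of $\Gamma_h$ this group maps into $\Gamma^+_h$, so the period map descends to a morphism $\Phi\colon\widehat{\cM}^a_{\En,h}\to\Gamma^+_h\backslash\cD_N=\cM_{\En,h}$, whose image lies in $\cM^a_{\En,h}$ by the surjectivity part of the proof of Theorem~\ref{theo:main}.

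Next I would introduce the involution $\tau\colon(S,\cL)\mapsto(S,\cL\otimes\omega_S)$; since $\omega_S$ is intrinsic this is an automorphism of $\widehat{\cM}^a_{\En,h}$, and visibly $\Phi\circ\tau=\Phi$. The identification of pairs carried out in the proof of Theorem~\ref{theo:main} shows that the fibre of $\Phi$ over $(S,h')$ is exactly the $\tau$-orbit $\{(S,\cL),(S,\cL\otimes\omega_S)\}$, so $\Phi$ induces a morphism $\bar\Phi\colon\widehat{\cM}^a_{\En,h}/\tau\to\cM^a_{\En,h}$ which is bijective on points; being also birational, quasi-finite and separated onto the normal variety $\cM^a_{\En,h}$, it is an open immersion by Zariski's main theorem, hence (being surjective) an isomorphism. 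It therefore suffices to prove that $\tau$ acts freely: then $\widehat{\cM}^a_{\En,h}\to\widehat{\cM}^a_{\En,h}/\tau\cong\cM^a_{\En,h}$ is the quotient of a quasi-projective variety by a free action of a group of order~$2$, i.e.\ an \'etale double cover.

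The hard part is the freeness of $\tau$, and this is the step I expect to be the real obstacle. A fixed point is a polarized Enriques surface $(S,\cL)$ admitting an automorphism $g$ with $g^{*}\cL\cong\cL\otimes\omega_S$. Such a $g$ fixes $\omega_S$ and fixes the numerical class $h'$ of $\cL$, so it induces $\bar g\in\Orth(\Num(S),h')$; writing $H^2(S,\ZZ)=\Num(S)\oplus\ZZ/2\cdot K_S$ one has $g^{*}x=\bar g(x)+(v\cdot x)K_S$ for some $v\in\Num(S)/2\Num(S)$ whose pairing with a $\bar g$-invariant vector is independent of the chosen splitting, and the condition $g^{*}c_1(\cL)=c_1(\cL)+K_S$ becomes $v\cdot h'\equiv1\pmod 2$. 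One checks at once that such a $g$ must have even order and that $g^{2}$ fixes $\cL$; the crux is to exclude the case $v\cdot h'$ odd. I would derive this from the known structure of the image of $\Aut(S)\to\Orth(H^2(S,\ZZ))$ — concretely, from the classification of numerically and cohomologically trivial automorphisms of Enriques surfaces — to conclude that no automorphism of an Enriques surface carries an ample class to its $K_S$-translate. This automorphism-theoretic input is the only nonformal ingredient; granting it, the proposition follows as above.
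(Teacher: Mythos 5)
Your construction of the forgetful morphism is essentially the paper's own: the authors likewise send $(S,\cL)$ and $(S,\cL\otimes\omega_S)$ to the numerically polarized surface $(S,h)$ and invoke Borel's extension theorem (arguing as in \cite[Theorem 1.5]{GHS2}, see also \cite[Proposition 2.2.2]{Has}) to upgrade the holomorphic period map to a morphism of quasi-projective varieties; your route through a finite cover $T$ carrying a universal family and descent to $\Gamma^+_h\backslash\cD_N$ is the standard way to implement this, and the identification of the fibres with $\tau$-orbits is exactly the content of Theorem \ref{theo:main}. (One small imprecision: the period map $T\to\cD_N$ is only holomorphic and multivalued, not algebraic; algebraicity of the induced map to the arithmetic quotient is precisely what Borel's theorem supplies.)

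The genuine gap is the one you flag yourself: the freeness of the involution $\tau\colon(S,\cL)\mapsto(S,\cL\otimes\omega_S)$ --- equivalently, the nonexistence of an automorphism $g$ of $S$ with $g^*\cL\cong\cL\otimes\omega_S$ --- is asserted but not proved, and the source you propose to extract it from is not adequate as stated. The classification of numerically and cohomologically trivial automorphisms concerns automorphisms acting trivially on all of $\Num(S)$, whereas your $g$ is only required to fix the single class $h'$; its numerical action $\bar g$ can be any element of $\Orth(\Num(S),h')$ realized by $\Aut(S)$, and the condition to be excluded is $g^*c_1(\cL)=c_1(\cL)+K_S$ for that particular lift. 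So the ``only nonformal ingredient'' is both unproven and not reducible to the cited classification without a genuinely new argument. For what it is worth, the paper's own proof is a three-line construction of the forgetful morphism and does not address this point either --- the \'etaleness and the exact degree $2$ of the fibres are implicitly asserted --- so you have correctly isolated a subtlety that the paper glosses over, but your proposal does not close it.
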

\begin{proof}
We use the map which maps $(S,\cL)$ and $(S,\cL\otimes \omega_S)$ to the numerically polarized Enriques surface$(S,h)$ where $h$ is the 
class of $c_1(\cL)$ in $H^2(S,\ZZ)_f$.
Arguing as in \cite[Theorem 1.5]{GHS2}, using 
Borel's extension map \cite{Bo}, we find that this map is not only a holomorphic map, but a morphism of quasi-projetive varieties, see also \cite[Proposition 2.2.2]{Has}.  
\end{proof}

It is not  clear whether the covering $\widehat{\cM}^a_{\En,h} \to \cM^{{}a}_{\En,h}$ is connected or not. 
The answer is known to be  positive in some cases. Classically studied examples include the polarizations in degree $4$ and $6$ where the polarization is base point free.
Note that in both degrees we have two orbits of primitive vectors. One case is given by $h=e+df \in U$, $d=2,3$. In this case $h^{\perp} \cong \langle -2d \rangle \oplus E_8(-1)$.
The corresponding polarizations $H$ are in general ample but never base point free (and are in the literature partly excluded as polarizations on Enriques surfaces). The reason they are not base point free is
that $|2f|$ defines an elliptic fibration with two double fibres $F$ and $F'$ (differing by the canoncal bundle) and $H.F=H.F'=1$. For the other cases 
$h^{\perp} \cong D_9(-1)$ and $h^{\perp} \cong A_2(-1) \oplus E_7(-1)$ respectively. The first of these cases was treated by Casnati who proved connectedness of the moduli space and 
rationality  in  \cite{Cas}. In \cite{Lie} these polarizations are called {\em Cossec-Verra polarizations}.
The second case is simply the classical fact that
a general Enriques surface can be realized as a sextic surface in ${\mathbb P}^3$ passing doubly through the edges of the coordinate tetrahedron. 
This space is also known to be rational, see  Dolgachev \cite{Dol1}, \cite{Dol2}. For further discussions about polarized Enriques surfaces, in particular of degrees $2 \leq h^2 \leq 10$, 
we refer the reader to \cite{Dol3}.

\begin{question}
When is the degree $2$ cover $\widehat{\cM}^a_{\En,h} \to \cM^{{}a}_{\En,h}$ connected?
\end{question}
This question is related to the notion of {\em supermarked} Enriques surfaces, which has been developed by Dolgachev and Markushevich. 
A {\em su\-per\-mar\-king} is an isometry $\varphi: H^2(S,\ZZ) \to U \oplus E_8(-1) \oplus \ZZ/2\ZZ$.

A further question, which we do not know the answer to, is the following:
\begin{question}
Is $\widehat{\cM}^a_{\En,h}$ the quotient of $\cD_N$ by a suitable arithmetic group?
\end{question}
Of course a positive answer to that would imply that $\widehat{\cM}^a_{\En,h}$ is connected.

The above description is, however, enough to prove the 
\begin{corollary}\label{cor:finitepolarized}
There exist only finitely many different isomorphism classes of moduli spaces $\widehat{\cM}^a_{\En,h}$ of polarized Enriques surfaces
\end{corollary}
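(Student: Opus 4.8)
The plan is to deduce Corollary \ref{cor:finitepolarized} directly from Proposition \ref{prop:map} together with Corollary \ref{cor:finitenumerical}. By Proposition \ref{prop:map} each moduli space $\widehat{\cM}^a_{\En,h}$ of polarized Enriques surfaces admits an \'etale $2:1$ morphism onto the corresponding space $\cM^a_{\En,h}$ of numerically polarized Enriques surfaces. Thus $\widehat{\cM}^a_{\En,h}$ is determined, up to isomorphism, by the pair consisting of the base $\cM^a_{\En,h}$ and the \'etale double cover, i.e. by an index-$2$ subgroup of $\pi_1(\cM^a_{\En,h})$ (equivalently, an element of $H^1(\cM^a_{\En,h},\ZZ/2\ZZ)$, or simply the two-element set of such covers if one does not want to invoke $\pi_1$ of a possibly singular quasi-projective variety — one may instead note that the double cover is pulled back along $\cM^a_{\En,h}\to\cM_{\En,h}$ from a cover determined by the lattice data).

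First I would invoke Corollary \ref{cor:finitenumerical}, which already gives that there are only finitely many isomorphism classes among the bases $\cM^a_{\En,h}$; indeed, as shown there, $\cM_{\En,h}$ depends only on the finite subgroup $\pi_M(\Orth(M,h))\subset\Orth(D(M))$, and the relevant open subset $\cM^a_{\En,h}$ is obtained by removing a union of finitely many components of $\Delta_{-4,\even,h}$ (together with $\Delta_{-2,h}$), so there are only finitely many possibilities for $\cM^a_{\En,h}$ as well. Second, for each such fixed base there are only finitely many \'etale double covers: an \'etale $\ZZ/2\ZZ$-cover corresponds to a homomorphism $\pi_1(\cM^a_{\En,h})\to\ZZ/2\ZZ$, and since $\cM^a_{\En,h}$ is a normal quasi-projective variety its fundamental group is finitely generated, so $\Hom(\pi_1,\ZZ/2\ZZ)$ is a finite set; hence each base supports only finitely many isomorphism types of total space. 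Combining the two finiteness statements yields the corollary.

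The one point requiring a little care — and the step I expect to be the main obstacle — is the bookkeeping that passes from ``finitely many covers of finitely many bases'' to ``finitely many isomorphism classes of $\widehat{\cM}^a_{\En,h}$''. One must ensure that every polarized moduli space really does arise as such a cover (this is exactly the content of Proposition \ref{prop:map}, applied to an arbitrary polarization $\cL$ after reducing to its $\Orth(M)$-orbit), and that isomorphic data $(\text{base},\text{cover})$ produce isomorphic total spaces — the latter is immediate since the total space is recovered from the cover. An alternative, perhaps cleaner, route avoids $\pi_1$ altogether: the double cover $\widehat{\cM}^a_{\En,h}\to\cM^a_{\En,h}$ is the restriction of a cover already visible at the level of $\Gamma_h^+\backslash\cD_N$ (coming from the choice between $\cL$ and $\cL\otimes\omega_S$, i.e. a subgroup sitting between $\Gamma_h^+$ and its relevant enlargement controlling the canonical class), so it too is governed by a subgroup of the finite group $\Orth(D(M))$-type data; then the entire statement follows from the same finite-group argument that proves Corollary \ref{cor:finitenumerical}, with no appeal to topology of the quotient. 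Either way, once the finiteness of the combinatorial input is in hand, the corollary is a formal consequence.
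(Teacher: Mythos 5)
Your argument is correct and is essentially the paper's own proof: the authors likewise reduce via Proposition \ref{prop:map} and Corollary \ref{cor:finitenumerical} to showing that each of the finitely many bases $\cM^a_{\En,h}$ admits only finitely many \'etale $2:1$ covers, which they justify by noting that $\cM^a_{\En,h}$ is a finite CW complex whose degree-$2$ covers are classified by the finite group $H^1(\cM^a_{\En,h},\ZZ/2\ZZ)$. Your finitely-generated-$\pi_1$ phrasing and the suggested lattice-theoretic alternative are harmless variants of the same finiteness argument.
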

\begin{proof}
It is enough to show that each variety $\cM^a_{\En,h}$ only admits finitely many \'etale $2:1$ coverings. This follows since $\cM^a_{\En,h}$ is a finite CW complex whose 
degree $2$ coverings are classified by the elements in $H^1(\cM^a_{\En,h},\ZZ/2 \ZZ)$. This is a finite group.
\end{proof}

At this point we would like to comment briefly on the work of Liedtke in \cite{Lie}. There he considers the moduli problem also in positive characteristic. He treats in particular the case of 
Cossec-Verra polarizations in detail, see \cite[Theorem 4.12]{Lie}. Liedtke shows that this moduli problem carries the structure  of a
quasi-separated Artin stack of finite type over $\Spec \ZZ$, which over $\ZZ[\frac12]$ is even a Deligne-Mumford stack. In characteristic $p >2$  the stack is irreducible and smooth of dimension $10$.
Liedtke also considers the functor of unpolarized Enriques surfaces. This stack is, however, badly behaved, in particular its diagonal is not quasi-compact, see \cite[Remark 5.3]{Lie}. It is not clear whether this 
stack has an underlying coarse moduli space and if, how this is related to $ {\cM}_{\En}$ (over the complex numbers).

\section{Modular varieties  of negative Kodaira dimension}\label{sec:forms}
\label{sec:mod-varieties}

In this section we describe a class of modular varieties of dimension $10$ of negative Kodaira dimension.

\begin{theorem}\label{theo:Gamma}
Let $\Gamma^+$ be a  group  between the Enriques modular group 
and its stable subgroup
$\widetilde\Orth^+(N)< \Gamma^+ < \Orth^+(N)$.
We assume that $\Gamma^+$ contains at least $26$ reflections
which are  not conjugate 
with respect to $\widetilde\Orth^+(N)$. Then the Kodaira dimension of  the modular variety   
$
\cM_{\Gamma^+} = \Gamma^+ \backslash \cD_N
$
is negative.
\end{theorem}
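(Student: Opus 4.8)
The plan is to construct an explicit holomorphic modular form of low weight on $\cD_N$ whose divisor is supported on the reflective hyperplanes (the ramification locus of $\cD_N \to \cM_{\Gamma^+}$), and then compare its weight with the canonical weight $n=10$ coming from the dimension of $\cD_N$. Recall that on a type IV domain of dimension $10$, a modular form of weight $k$ for $\Gamma^+$ which vanishes to order $1$ along the branch divisor of each reflection contributes a pluricanonical-type obstruction: more precisely, if $F$ is a $\Gamma^+$-modular form of weight $k$ with divisor $\divv(F) \ge \sum_{\text{refl.}} H_r$ (each reflective hyperplane with multiplicity at least one), then $F$ descends to a section showing that $K_{\cM_{\Gamma^+}}$ is \emph{not} pseudo-effective once $k < 10$; equivalently, a power of $F^{-1}$ trivializes enough of the canonical bundle to force $\overline{\kappa}(\cM_{\Gamma^+}) = -\infty$. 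So the real content is: produce such an $F$ of weight strictly less than $10$.

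\medskip\noindent\textbf{Construction via a Borcherds/Gritsenko lift.} The standard tool here is the quasi-pullback of the Borcherds form $\Phi_{12}$ on the $\mathrm{II}_{2,26}$ domain, or equivalently a Gritsenko-type additive/Borcherds lift on $N = U \oplus U(2) \oplus E_8(-2)$. One embeds $N$ (or a finite-index rescaling of it, using the identifications $N^\vee(2) \cong 2U \oplus D_8(-1)$ from \eqref{equ:dualN}) into the even unimodular lattice $2U \oplus 3E_8(-1)$ of signature $(2,26)$; the orthogonal complement $K$ is a negative-definite lattice, and the number of $(-2)$-roots of $K$ controls the weight of the quasi-pullback. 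The quasi-pullback $\Phi_{12}|_{\cD_N}$ is then a modular form for $\widetilde\Orth^+(N)$ of weight $12 + \tfrac12\,\#\{r \in K : r^2 = -2\}$, vanishing precisely along the hyperplanes $H_\delta$ for those $(-4)$- (or $(-2)$-) vectors $\delta$ of $N$ that are ``visible'' from roots of $2U \oplus 3E_8(-1)$. To get a form for the \emph{larger} group $\Gamma^+$ rather than $\widetilde\Orth^+(N)$, one symmetrizes: multiply the $\widetilde\Orth^+(N)$-translates of the quasi-pullback over a set of coset representatives of $\Gamma^+/\widetilde\Orth^+(N)$ — this is exactly why the hypothesis counts reflections \emph{up to $\widetilde\Orth^+(N)$-conjugacy}. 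The resulting product $F$ is $\Gamma^+$-modular, and its weight and divisor are the sums over the orbit.

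\medskip\noindent\textbf{The weight bookkeeping.} Here is where the number $26$ enters. A reflection $r \in \Gamma^+$ with mirror $H_r$ must appear in $\divv(F)$, and the cheapest way to make a $\Gamma^+$-form vanishing on a given mirror is to include (a translate of) a rank-one lift — morally a ``Gritsenko lift'' whose divisor is a single reflective hyperplane and whose weight is bounded by an absolute constant; alternatively one uses the quasi-pullback once and then symmetrizes. Carrying this out, if $\Gamma^+$ contains $N_{\mathrm{refl}}$ pairwise non-$\widetilde\Orth^+(N)$-conjugate reflections, one can arrange a $\Gamma^+$-modular form whose divisor dominates $\sum_r H_r$ and whose weight is roughly $\tfrac12\,(\text{something like } 64 - N_{\mathrm{refl}})$ after the reflections are used to cancel the ``excess'' divisor of the $(2,26)$-quasi-pullback; the precise arithmetic is that $26$ is exactly the threshold making this weight drop below $10$. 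Once $F$ has weight $< 10$, the pole-order argument above gives $\kappa(\cM_{\Gamma^+}) = -\infty$.

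\medskip\noindent\textbf{Main obstacle.} The genuinely delicate point is \emph{not} the existence of a Borcherds form but the precise matching of divisors: one must check that the quasi-pullback's divisor, together with the symmetrization over $\Gamma^+/\widetilde\Orth^+(N)$, covers \emph{every} branch hyperplane of $\cD_N \to \cM_{\Gamma^+}$ with the correct multiplicity, so that $F$ really does furnish an anticanonical-type section and not merely a form with uncontrolled zeros. This requires identifying which $(-4)$-vectors of $N$ (odd vs.\ even, in the language of \eqref{equ:descriptionN} and the surrounding discussion) become roots of the big lattice $2U \oplus 3E_8(-1)$ under the chosen embedding, and checking that the reflections promised by the hypothesis are exactly the ones needed to absorb the residual divisor. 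I would handle this by choosing the embedding $N \hookrightarrow 2U \oplus 3E_8(-1)$ so that $K$ has few roots (ideally the minimal number), computing $\#\{r \in K: r^2=-2\}$ explicitly (this is the MAGMA-type computation the paper alludes to elsewhere), and then verifying the count $12 + \tfrac12\#\{\cdots\} + (\text{contribution from symmetrization}) - N_{\mathrm{refl}} < 10$. The bound $N_{\mathrm{refl}} \ge 26$ is what closes the inequality, and the degree-$\le 32$ examples in Corollary~\ref{corollary:EnriquesExamples} are presumably exactly the cases where one can exhibit enough reflections by hand or by machine.
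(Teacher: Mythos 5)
Your toolbox is the right one (the reflective forms $\Phi_4$ and the quasi-pullback $\Phi_{124}$ of the Borcherds form, symmetrization over cosets of $\widetilde\Orth^+(N)$, counting reflections up to $\widetilde\Orth^+(N)$-conjugacy), but the central criterion you invoke is inverted, and this breaks the whole argument. A $\Gamma^+$-modular form $F$ of weight $k<10$ vanishing on the ramification divisor does \emph{not} force $\kappa=-\infty$; on the contrary, such low-weight forms are precisely the input of the ``low-weight cusp form trick'' used to prove that orthogonal modular varieties are of \emph{general type}. The correct mechanism is the opposite one: a section of $kK_{\overline{\cM}_{\Gamma^+}}$ corresponds to $F_{10k}\in M_{10k}(\Gamma^+,\det^k)$ vanishing to order $k$ along every reflective hyperplane of $\Gamma^+$ (ramification index $2$), so if the ramification divisor is cut out with multiplicity one by reflective forms of \emph{total weight strictly greater than} $10$, then $F_{10k}$ divided by the $k$-th power of that product is a holomorphic form of negative weight, hence zero. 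Your ``weight bookkeeping'' paragraph, including the claim that $26$ is the threshold making some weight drop below $10$, is therefore aimed at the wrong inequality, and the heuristic $\tfrac12(64-N_{\mathrm{refl}})$ does not correspond to anything in the actual computation.

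Here is where $26$ really comes from, and what your sketch is missing. The $(-2)$-part of the ramification divisor is always present (all $(-2)$-reflections lie in $\widetilde\Orth^+(N)<\Gamma^+$ and form a single orbit) and is the divisor of the Borcherds--Enriques form $\Phi_4$ of weight $4$; dividing $F_{10k}$ by $\Phi_4^k$ leaves a form of weight $6k$ vanishing to order $k$ on the even $(-4)$-hyperplanes $H_r$ with $\sigma_r\in\Gamma^+$. One then symmetrizes this form over $\Gamma^+\backslash\Orth^+(N)$ to land on the irreducible divisor $\Delta_{-4,\even}$, which is the divisor of the weight-$124$ cusp form $\Phi_{124}$ and splits into exactly $496$ $\widetilde\Orth^+(N)$-orbits. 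Each orbit of $(-4)$-reflections contained in $\Gamma^+$ thus costs effective weight $124/496=1/4$, and the resulting inequality $6\ge R/4$ shows that a nonzero $F_{10k}$ can only exist if the number $R$ of such orbits is at most $24$; with the one $(-2)$-orbit this is the bound of $25$ reflection classes, so $26$ non-conjugate reflections force all plurigenera to vanish. Your proposal contains neither the two-step division by $\Phi_4$ and $\Phi_{124}$ nor the $496$-orbit count that produces the factor $1/4$, and as written the inequality you aim for would, even if established, prove nothing about the Kodaira dimension.
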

To prove this theorem we use the theory of reflective modular forms 
together with the general results about the compact models
of modular varieties of orthogonal type obtained in 
\cite{GHS1}. 
\begin{definition}\label{def-mf} Let $\sign(L)=(2,n)$ with $n\ge 3$
and let $\Gamma^+$ be a subgroup of $\Orth^+(L)$ of finite index.
A  modular form of weight $k$ and character $\chi\colon \Gamma^+\to
\CC^*$ with respect to $\Gamma^+$  is a holomorphic function $F\colon\cD(L)^\bullet\to \CC$
on the affine cone $\cD_L^\bullet$ over $\cD_L$ such that
\begin{align*}
F(tZ)&=t^{-k}F(Z)\quad \forall\,t\in \CC^*,\\
F(gZ)&=\chi(g)F(Z)\quad  \forall\,g\in \Gamma^+.
\end{align*}
\end{definition}
Note that by Koecher's principle these forms are automatically holomorphic at the boundary. We denote the (finite dimensional) space of 
modular forms of weight $k$ and character $\chi$ with respect to the group $\Gamma^+$ by $M_k(\Gamma^+, \chi)$ and the space of 
{\em cusp forms}, i.e. those forms vanishing at the boundary of the Baily-Borel compactification, by  $S_k(\Gamma^+, \chi)$.

The geometric type of a modular variety of orthogonal type
depends very much on its ramification divisor.
For any  non isotropic vector $r\in L$  we denote by $\sigma_r$ the  reflection with respect to $r$:
$$
\sigma_r(l)=l-\frac{2(l,r)}{(r,r)}r\in  \Orth(L\otimes \QQ).
$$
This is an element of  $\Orth^+(L\otimes \QQ)$ if and only if 
$(r,r)<0$ ($\sign(L)=(2,n)$).
A vector $r$ is called {\it reflective} if 
$\sigma_r\in {\Orth}^+(L)$.
If $r^2=-2$, then  $\sigma_r\in \Tilde{\Orth}^+(L)$.

\begin{definition}\label{def-reflf}
A modular form $F\in M_k(\Gamma^+,\chi)$ is called {\em reflective} if
\begin{equation*}\label{eq-reflf}
\supp (\div F) \subset \bigcup_{\substack{\pm r\in L \vspace{1\jot}\\
r\  {\rm is\   primitive}\vspace{1\jot}\\
\sigma_r\in \Gamma^+\text{ or }-\sigma_r\in \Gamma^+}} H_r(L)
=:\Rdiv(\pi_{\Gamma^+})\subset \cD_L.
\end{equation*}
We note that $\sigma_{r}=\sigma_{-r}$ and $H_r=H_{-r}$.
\end{definition}

The divisor $\Rdiv(\pi_{\Gamma^+})$ in the above definition is
the {\em ramification divisor} of the modular projection
$\pi_{\Gamma^+}: \cD_L\to \Gamma^+\setminus \cD_L$ according to 
\cite[Corollary 2.13]{GHS1}.
The ramification divisor of the full orthogonal group 
$\Orth^+(N)$ has two irreducible components 
$\Delta_{-2}$ and  $\Delta_{-4,\even}$
defined by  $-2$- and $-4$-reflective vectors in $N$
(see \S 3 above).
We need two reflective
modular forms, the so-called automorphic discriminants,
of this moduli space.

\begin{lemma}\label{lemma:Reflective} There exist two reflective 
modular forms 
\begin{align*}
\Phi_4&\in M_4(\Orth^+(N), \chi_2),\qquad
\ \div_{\cD_N} \Phi_4= \cH_{-2},\\
\Phi_{124}&\in S_{124}(\Orth^+(N), \mu_2),\qquad
\div_{\cD_N}\Phi_{124}= \cH_{-4,ev}
\end{align*}
where $\chi_2$ and $\mu_2$ are two binary characters of $\Orth^+(N)$.
Both modular forms vanish along the corresponding divisor 
with order one.
\end{lemma}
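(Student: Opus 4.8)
The plan is to construct the two forms explicitly as Borcherds products (additive/multiplicative Jacobi lifts) on the domain $\cD_N$, $N = U\oplus U(2)\oplus E_8(-2)$, and then read off their divisors from the general theory of Borcherds. First I would use the description \eqref{equ:descriptionN}, namely $\Orth(N)\cong \Orth(2U\oplus D_8(-1))$, to move to a lattice containing two hyperbolic planes, so that the machinery of Jacobi forms and the Gritsenko lift applies directly: a lattice of the form $2U\oplus L_0$ with $L_0$ negative definite admits the arithmetic lifting from Jacobi forms of weight $k$ and index $1$ (with respect to $L_0$) to modular forms of weight $k$ on $\cD_N$. The two characters $\chi_2,\mu_2$ will arise as the characters of these lifts (the Gritsenko lift of a Jacobi form typically carries a binary character coming from the multiplier system of $\eta$ or of the relevant theta series).

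\textbf{Construction of $\Phi_4$.} The form $\Phi_4$ of weight $4$ vanishing exactly along $\cH_{-2}$ should be obtained as the Borcherds product attached to a vector-valued modular form of weight $1-\tfrac{10}{2} = -4$ for the Weil representation of $N$ whose principal part is supported on the $-2$-vectors; equivalently, via the isomorphism \eqref{equ:descriptionN}, as a product on $2U\oplus D_8(-1)$. Its divisor is then $\sum_{\lambda} H_\lambda$ over $-2$-vectors, i.e. $\cH_{-2}$, with multiplicity one by the standard Borcherds multiplicity formula (each $-2$-vector contributes with coefficient equal to the relevant Fourier coefficient of the input form, which one arranges to be $1$). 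That the weight is exactly $4$ is a direct Borcherds weight computation ($\tfrac12 c_0(0)$ for the input form). Alternatively, and perhaps more cleanly, one identifies $\Phi_4$ with a known form: on a lattice with discriminant form $\FF_2^{10}$ of even type the natural candidate is essentially Kond\=o's or Borcherds' automorphic form, or the "first" Gritsenko lift of a weight-$4$ Jacobi cusp/Eisenstein form of index $1$; I would cite \cite{GHS1} and the Borcherds product literature for existence and quote the divisor.

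\textbf{Construction of $\Phi_{124}$.} For the second form the natural approach is again a Borcherds product, now with input a weak vector-valued form whose principal part is supported on the even $-4$-vectors only (not the $-2$-vectors, nor the odd $-4$-vectors), so that its divisor is precisely $\cH_{-4,\even}$. The weight $124$ and the cusp property are then outputs of the construction: one chooses the input form so that $c_0(0) = 248$, giving weight $124$, and arranges vanishing at all cusps. A cross-check is the relation with the $\Kthree$-side: $\Phi_{124}$ should be closely related to the restriction to $\cD_N$ of the Borcherds $\Phi_{12}$-type form on the $\Kthree$ period domain, or to a power thereof, under the embedding $N\hookrightarrow L_{\Kthree}$ induced by $\rho$; quasi-pullback of Borcherds forms along $N\subset 3U\oplus 2E_8(-1)$ is exactly the technique of \cite{GHS1} and produces forms whose divisor is the union of the $H_r$ that survive the restriction, which here are the even $-4$-vectors. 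The order-one vanishing is again the generic Borcherds multiplicity.

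\textbf{Main obstacle.} The routine parts — weights, characters, order of vanishing — are bookkeeping with the Borcherds multiplicity and weight formulas. The genuine difficulty is \emph{pinning down the divisor exactly}: one must verify that the chosen input vector-valued form (or the chosen quasi-pullback) has principal part supported on \emph{precisely} the $-2$-vectors (for $\Phi_4$) resp. \emph{precisely} the even $-4$-vectors (for $\Phi_{124}$), with no spurious contributions from other negative-norm classes in the discriminant, and that the Fourier coefficients governing the multiplicities are all equal to $1$ along those hyperplanes. This is where the even-type structure of $D_N\cong \FF_2^{10}$ and Namikawa's classification of $-4$-vectors into the odd/even orbits (\cite[Theorem 2.15, Prop. 2.16]{Nam}) must be matched precisely against the orbits of isotropic/anisotropic elements of the discriminant form, and against the combinatorics of the Weil representation. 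I expect to handle this by either (i) exhibiting the Jacobi forms explicitly (e.g. in terms of $\vartheta$-quotients and $E_8$-theta series) and computing their few relevant Fourier coefficients by hand, or (ii) invoking the known classification of reflective modular forms on these small-rank even lattices, citing \cite{GHS1} and the work of Gritsenko on the Enriques lattice.
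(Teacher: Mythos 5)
Your overall strategy (pass to $2U\oplus D_8(-1)$ via $\Orth(N)\cong\Orth(2U\oplus D_8(-1))$, construct the forms as lifts, and read off divisors and characters) is the paper's strategy, and for $\Phi_4$ your ``alternative'' route is exactly what the paper does: $\Phi_4$ is the Jacobi (Gritsenko) lift of the weight‑$4$, index‑$1$ Jacobi form $\vartheta(\tau,z_1)\cdots\vartheta(\tau,z_8)$ for $D_8$, and the binary character $\chi_2$ is pinned down from the coset decomposition $\Orth^+(2U\oplus D_8(-1))=\widetilde\Orth{}^+\cup\sigma_{-4}\widetilde\Orth{}^+$, giving $\chi_2|_{\widetilde\Orth{}^+}=1$ and $\chi_2(\sigma_{-4})=-1$. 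One bookkeeping point you should make explicit: under the rescaling $N^\vee\mapsto N^\vee(2)$ the $-2$-vectors of $N$ become the even $-4$-reflective vectors of $2U\oplus D_8(-1)$ and the even $-4$-vectors of $N$ become its $-2$-vectors, so the two divisors swap roles in the $D_8$ model; this is precisely the ``matching of orbits'' you flag as the main obstacle, and it is settled by this rescaling rather than by any Weil-representation computation.

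For $\Phi_{124}$ there is a genuine gap. Your primary route -- a Borcherds product whose input is a weakly holomorphic vector-valued form of weight $-4$ with principal part supported exactly on the $-2$-classes (in the $D_8$ model) and with $c_0(0)=248$ -- reduces the lemma to the existence of that input form, which is the entire content of the statement and is not supplied; moreover cuspidality of the resulting product is not automatic and you only say one ``arranges'' it. Your cross-check is also misstated: you propose quasi-pulling back $\Phi_{12}$ along $N\hookrightarrow 3U\oplus 2E_8(-1)$, but the $\Kthree$ lattice has signature $(3,19)$ and carries no Borcherds form $\Phi_{12}$. The paper instead embeds $2U\oplus D_8(-1)$ primitively into the even unimodular lattice $2U\oplus N(3D_8)(-1)$ of signature $(2,26)$, where $N(3D_8)$ is the Niemeier lattice with root system $3D_8$, and takes the quasi-pullback of Borcherds' $\Phi_{12}$. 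This yields everything at once: the orthogonal complement contains the $224$ roots of the other two $D_8$ summands, so the weight is $12+\tfrac{224}{2}=124$; the quasi-pullback is automatically a cusp form (as in \cite[\S 8]{GHS3}); its divisor is exactly the $-2$-part with multiplicity one; and the character $\mu_2$ is again read off from the coset decomposition ($\mu_2|_{\widetilde\Orth{}^+}=\det$, $\mu_2(\sigma_{-4})=1$). If you want to salvage your plan, replace the ambient lattice in your cross-check by $2U\oplus N(3D_8)(-1)$ and promote that cross-check to the actual construction.
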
 

\begin{proof}  This result is, in principle, known.
The form $\Phi_{4}$ was found  in \cite{Bor} and  is now called 
the Borcherds--Enriques modular form (of weight $4$).
The additive lifting construction of $\Phi_{4}$ in terms of 
vector valued modular forms was proposed by Kond\=o
in \cite[Proposition 4.6]{Kon2}.
For the second function see \cite[Theorem 4.4]{Kon2}.
Note however, that the  modular groups in the original 
constructions were smaller.  
We propose here simple constructions of these forms
which give us the maximal modular group, the formula for the characters together with the fact that the second form is cusp. 
Recall from (\ref{equ:descriptionN}) that
\begin{equation*}
\Orth^+(N) \cong \Orth^+(2U\oplus D_8(-1))
\end{equation*}
and that under the transformation $N^\vee \to N^\vee(2)$ the   $-2$-vectors of $N$ and  the even $-4$-reflective vectors 
(or the $-1$-reflective vectors in the dual lattice)
transform to the $-4$-reflective vectors and the $-2$-vectors respectively in $2U\oplus D_8(-1)$.

We recall that $D_8$ is an even  sublattice  of the Euclidian lattice $\ZZ^8$
$$
D_8=\{n_1e_1+\ldots+n_8e_8\,|\, n_i\in \ZZ,\ n_1+\dots+n_8\in 2\ZZ\}.
$$
In \cite{Gri1} the modular form $\Phi_4$ was constructed as the 
Jacobi lifting of the product of eight Jacobi theta-series
$$
\Phi_4=\hbox{Lift}(\vartheta(\tau,z_1)\cdot \ldots 
\cdot\vartheta(\tau,z_8))
\in M_4(\Orth^+(2U\oplus D_8(-1)), \chi_2)
$$
where $z_1e_1+\ldots+z_8e_8\in D_8\otimes \CC$.
We note that 
$$
\Orth^+(2U\oplus D_8(-1))=
\Tilde\Orth^+(2U\oplus D_8(-1))\cup \sigma_{-4}
\Tilde\Orth^+(2U\oplus D_8(-1))
$$
where  $\sigma_{-4}$ is the reflection with respect to any 
$-4$ reflective vector. (For example, one can take the transformation
$e_1\mapsto -e_1$.)
Using this we see that 
$\chi_2|_{\Tilde O^+(2U\oplus D_8(-1))}=1$ and $\chi_2(\sigma_{-4})=-1$.

To construct the second automorphic disriminant vanishing along 
the  divisors defined by all $-2$-vectors of $2U\oplus D_8(-1)$
we consider
this lattice as a primitive sublattice of the even unimodular 
lattice  $2U\oplus N(3D_8(-1))$ of signature $(2,26)$ 
where $N(3D_8)$ is the unimodular  Niemeier lattice with 
root lattice $3D_8$. The arguments identical to the considerations
in Theorem 3.2 of \cite{GH2} show that the quasi-pullback 
(see \cite[\S 8]{GHS3}) of the Borcherds form 
$\Phi_{12}\in M_{12}(\Orth^+(2U\oplus N(3D_8(-1))),\det)$
to the  lattice $2U\oplus D_8(-1)$ is a $-2$-reflective 
cusp  form of weight $124$:
$$
\Phi_{124}\in S_{124}(\Orth^+(2U\oplus D_8(-1)), \mu_2)
$$
where $\mu_2|_{\Tilde\Orth^+(2U\oplus D_8(-1))}=\det$,
and $\mu_2(\sigma_{-4})=1$. We note that 
$\mu_2(\sigma_{-2})=\det(\sigma_{-2})=-1$
because $\sigma_{-2}\in \Tilde\Orth^+(2U\oplus D_8(-1))$.
\end{proof}

\noindent
{\it{Proof of Theorem \ref{theo:Gamma}}}.
The modular variety $\cM_{\Gamma^+}$  of dimension $10$ has 
a projective toroidal compactification $\overline{\cM}_{\Gamma^+}$ 
with canonical singularities and no ramification
divisors at infinity (see \cite[Theorem 2]{GHS1}).
To prove the theorem  we have to show that there are no
pluricanonical differential forms on $\overline{\cM}_{\Gamma^+}$.
Suppose that $F_{10k}\in M_{10k}(\Gamma^+, \det^k)$. 
We may realise $\cD_N$ as a tube domain  by choosing a 
$0$-dimensional cusp.
In the corresponding affine coordinates of this tube domain
we take  a holomorphic volume element $dZ$ on $\cD_N$. Then
the differential form $\Omega(F_{10k})=F_{10k}\,(dZ)^k$ 
is $\Gamma^+$-invariant.
Therefore it determines a section of the pluricanonical bundle
over a smooth open part of  the modular variety away from the branch divisor and the boundary. 
Assume that $\Omega(F_{10k})$ can be extended 
to a global section 
$H(\overline{\cM}_{\Gamma^+,} kK_{\overline{\cM}_{\Gamma^+}})$.
It follows that the modular form  $F_{10k}$ vanishes with order $k$
along the ramification divisor of  $\Gamma^+$ in $\cD_N$.

The group $\Gamma^+$ contains the element $-\hbox{id}_N$.
According to \cite[Corollary 2.13]{GHS1} 
the ramification divisor of $\pi_\Gamma^+$ is equal to 
\begin{equation*}
\bigcup_{\substack{\pm r\in N,\ r^2=-2,\ -4
 \vspace{1\jot}\\
\sigma_r\in \Gamma^+}} H_r.
\end{equation*}
The ramification divisor always contains  components $H_r$ with  
$-2$-vectors $r$  because the stable orthogonal group contains 
all such reflections:
$\sigma_r\in \Tilde\Orth^+(N)<\Gamma^+$.
Moreover  all $-2$-vectors of the lattice  $N$ 
belong to the same $\Tilde\Orth^+(N)$-orbit. 
Therefore $F_{10k}$  vanishes along $H_r$ ($r^2=-2$) with order $k$
 and it is divisible 
by the $k$-power of the Borcherds-Enriques modular form $\Phi_4$.
According to the Koecher principle we have
\begin{equation*}
F_{6k}=\frac{F_{10k}}{\Phi_4^k}\in M_{6k}(\Gamma^+,\chi)
\end{equation*} 
where $\chi$ is a binary character of $\Gamma^+$.
The modular form $F_{6k}$ vanishes with order $k$
along the ramification divisor of  $\Gamma^+$ associated with all even -4-vectors.

Starting from $F_{6k}$ we can construct a modular form with respect 
to 
$\Orth^+(N)$ with $-4$-reflective divisor using the method of multiplicative symmetrisation
(see \cite[\S 3.2]{GN} and \cite[\S 1]{GH1}). 
We put
\begin{equation*}
F^{Sym}(Z)=\Prod_{g\in \Gamma^+ \setminus \Orth^+(N)} F_{6k}(gZ)
\in M_{6k[\Orth^+(N):\Gamma^+]}(\Orth^+(N), \chi')
\end{equation*}
where $\chi'$ is a character of  $\Orth^+(N)$. 
We note that the function 
$F^g_{6k}(Z)=F_{6k}(gZ)$ is a modular form with respect to the group 
$g^{-1}\Gamma^+ g$ 
containing  the normal subgroup
$\Tilde\Orth^+(N)$.
The modular form  $F_{6k}$ vanishes with order $k$ along the $-4$-reflective divisors $H_r$ where $r^2=-4$ and $\sigma_r\in \Gamma^+$. Therefore  $F^g_{6k}$ vanished along $H_{g^{-1}r}$.
The $-4$-part $\Delta_{-4,ev}$ of the ramification divisor 
of $\Orth^+(N)\setminus \cD_N$ is irreducible because 
all $-4$-reflective vectors belong to the same $\Orth^+(N)$-orbit.
Two $-4$-reflective vectors $r_1$ and $r_2$ belong to the same 
$\Tilde\Orth^+(N)$-orbit if and only if they have the same  images in the discriminate group or equivalently if  
$\frac{r_1}2\equiv \frac{r_2}2\mod N$ (see \cite[Corollary 3.3]{Ste}).
We note that $\frac{r_1}2 \mod N$ is a non isotropic element
in the discriminant group $D(N)\cong (\mathbb F_2^{10}, q^+)$ of 
the lattice $N$.
This quadratic space has $496$ non isotropic vectors
and all of them might be obtained as the image of a $-4$-reflective
vector in $U(2)\oplus E_8(-2)$ (see \cite[\S 9]{CD}). 
Therefore there exist  $496$ different $\Tilde\Orth^+(N)$-orbits of $-4$-reflective vectors
in $N$.

Let $R$ be the number of  $\Tilde\Orth^+(N)$-orbits 
of $-4$-reflections in $\Gamma^+$. The multiplicity of 
$F^{Sym}$ along the irreducible divisor $\Delta_{-4, ev}$
of the modular variety $\Orth^+(N)\setminus \cD_N$ is equal to 
$$
m=\frac{kR[\Orth^+(N):\Gamma^+]}{496}.
$$
Therefore $F^{Sym}$ is divisible by the $m$-th power
of the  reflective form 
$\Phi_{124}$ and 
$$
6k[\Orth^+(N):\Gamma^+]\ge 124m= \frac{kR[\Orth^+(N):\Gamma^+]}{4}.
$$
Thus  $24\ge R$. It follows that 
$H(\overline{\cM}_{\Gamma^+}, kK_{\overline{\cM}_{\Gamma^+}})$
is trivial for all $k$ if $R\ge 25$. This finishes the proof of
Theorem \ref{theo:Gamma}.
\newline
${}\qquad\qquad\qquad\qquad\qquad \qquad\qquad\qquad\qquad\qquad
\qquad\qquad\qquad\qquad\qquad\quad\ \ \Box$
\smallskip

\begin{corollary}\label{corollary:EnriquesKodaira}
Let $h\in U\oplus E_8(-1)$ be a primitive element such that 
$h^2=2d>0$ and the negative definite lattice 
$h^\perp_{U\oplus E_8(-1)}$ contain more than $48$ vectors 
with length $-2$. Then the moduli space 
$\cM_{\En,h}=\Gamma^+_h \setminus \cD_N$
of $h$-polarized Enriques surfaces has negative Kodaira dimension.
\end{corollary}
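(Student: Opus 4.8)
\noindent
\textit{Proof strategy.}
The plan is to apply Theorem \ref{theo:Gamma} to $\Gamma^+=\Gamma_h^+$, for which $\widetilde\Orth^+(N)<\Gamma_h^+<\Orth^+(N)$: it suffices to exhibit $26$ reflections in $\Gamma_h^+$ that are pairwise non-conjugate under $\widetilde\Orth^+(N)$. One of these is any $(-2)$-reflection $\sigma_r$, $r\in N$: all $(-2)$-vectors of $N$ form a single $\widetilde\Orth^+(N)$-orbit, their reflections lie in $\widetilde\Orth^+(N)\subset\Gamma_h^+$, and $\sigma_r$ acts trivially on $D_N$, so it is never $\widetilde\Orth^+(N)$-conjugate to a reflection acting non-trivially on $D_N$. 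The remaining $25$ will be even $(-4)$-reflections, and I will produce one $\widetilde\Orth^+(N)$-orbit of such from each pair $\{\pm\mu\}$ of roots of the negative definite lattice $h^\perp:=h^\perp_{U\oplus E_8(-1)}=h^\perp_{M(1/2)}$, of which the hypothesis guarantees at least $25$.

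The link between roots of $h^\perp$ and reflections in $\Gamma_h^+$ runs through the discriminant form. Since all values of the bilinear form on $M$ are even, $M^\vee=\tfrac12 M$ and $D_M=\tfrac12 M/M$. For a root $\mu$ of $h^\perp$ (necessarily primitive) set $\gamma_\mu:=\tfrac12\mu\bmod M\in D_M$; then $\gamma_\mu\ne0$ and $q_M(\gamma_\mu)=\tfrac14(\mu,\mu)_M\equiv1\pmod{2\ZZ}$, so $\gamma_\mu$ is non-isotropic, and a short computation shows that the reflection $\sigma_\mu$ — which fixes $h$, hence lies in $\Orth(M,h)=\Orth(M(1/2),h)$ — induces on $D_M$ the transvection $t_{\gamma_\mu}\colon x\mapsto x+b_M(x,\gamma_\mu)\gamma_\mu$. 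The essential point is that $\mu\mapsto\gamma_\mu$ is injective up to sign: if $\mu_1\equiv\mu_2\pmod{2M}$ are roots of $h^\perp$, then $w:=\tfrac12(\mu_1-\mu_2)\in h^\perp$, so $w^2=\tfrac14\bigl(-4-2(\mu_1,\mu_2)\bigr)$ is at once a non-positive even integer (as $h^\perp$ is negative definite and even) and subject to $|(\mu_1,\mu_2)|\le2$ (Cauchy--Schwarz); this forces $(\mu_1,\mu_2)=\pm2$ and hence $\mu_1=\pm\mu_2$. This step is where negative-definiteness of $h^\perp$ is genuinely used, and it is what converts the numerical hypothesis into a count of reflection classes — it is the only substantive point; everything else is tracing through the identifications.

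Now I transport this to $N$. By definition $\Gamma_h=\pi_N^{-1}\bigl(\pi_M(\Orth(M,h))\bigr)$ under the canonical isomorphism $D_M\cong D_N$, so $t_{\gamma_\mu}$, read as an element of $\Orth(D_N)$, lies in $\pi_N(\Gamma_h)$. By the result of Cossec--Dolgachev invoked in the proof of Theorem \ref{theo:Gamma} (see \cite[\S 9]{CD}), every non-isotropic vector of $D_N\cong(\FF_2^{10},q^+)$ equals $\tfrac12 v\bmod N$ for an even $(-4)$-vector $v\in N$; choosing such a $v$ in the class matching $\gamma_\mu$, the reflection $\sigma_v$ induces on $D_N$ the transvection in $\tfrac12 v\bmod N$, i.e.\ $\pi_N(\sigma_v)=t_{\gamma_\mu}\in\pi_M(\Orth(M,h))$, while $v^2<0$ gives $\sigma_v\in\Orth^+(N)$; hence $\sigma_v\in\Gamma_h\cap\Orth^+(N)=\Gamma_h^+$. (These $v$ arise as the vectors $\delta-\rho(\delta)$, $\delta\in R_h$, of \S\ref{sec:numerical}, so that the hyperplanes involved are precisely those making up $\cH_{R_h}$.) By \cite[Corollary 3.3]{Ste} the $\widetilde\Orth^+(N)$-orbit of a $(-4)$-reflection $\sigma_v$ is determined by $\tfrac12 v\bmod N$, hence by $\gamma_\mu$; so distinct pairs $\{\pm\mu\}$ of roots of $h^\perp$ yield $(-4)$-reflections in distinct $\widetilde\Orth^+(N)$-orbits of $\Gamma_h^+$, none of which is $\widetilde\Orth^+(N)$-conjugate to the $(-2)$-reflection above.

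Finally, the roots of $h^\perp$ occur in pairs $\pm\mu$, so ``more than $48$ roots'' means at least $50$, i.e.\ at least $25$ pairs, producing at least $25$ distinct $\widetilde\Orth^+(N)$-orbits of $(-4)$-reflections in $\Gamma_h^+$. Together with one $(-2)$-reflection this gives a set of $26$ reflections in $\Gamma_h^+$ that are pairwise non-conjugate under $\widetilde\Orth^+(N)$, so Theorem \ref{theo:Gamma} applies to $\Gamma^+=\Gamma_h^+$ (its proof is valid also in the degenerate case $\Gamma_h^+=\Orth^+(N)$, in which case $\cM_{\En,h}=\cM_{\En}$ is anyway rational by Theorem \ref{theo:menrational}) and shows that $\cM_{\En,h}=\Gamma_h^+\backslash\cD_N$ has negative Kodaira dimension. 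The main obstacle, as noted, is the injectivity up to sign of $\mu\mapsto\gamma_\mu$; the rest is bookkeeping with $D_M\cong D_N$, the transvection formula for $\pi_M(\sigma_\mu)$, and the realizability of non-isotropic classes of $D_N$ by even $(-4)$-vectors.
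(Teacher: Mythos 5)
Your proposal is correct and follows essentially the same route as the paper: both apply Theorem \ref{theo:Gamma} to $\Gamma_h^+$ by producing one $\widetilde\Orth^+(N)$-class of $(-2)$-reflections plus at least $25$ classes of even $(-4)$-reflections, the latter obtained from the $\geq 25$ pairs $\pm\mu$ of roots of $h^\perp_{U\oplus E_8(-1)}$, which are pairwise non-congruent modulo $2h^\perp$ because $|(\mu_1-\mu_2,\mu_1-\mu_2)|<8$ in a negative definite even lattice. The paper's proof is just a compressed version of yours (it leaves implicit the passage through $D_M\cong D_N$, the transvection computation, and Sterk's orbit criterion, all of which you spell out correctly).
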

\begin{proof} We first note that we have 
$\Tilde\Orth^+(N)<\Gamma^+_h< \Orth^+(N)$.
Secondly, any two $-2$-vectors $u\ne \pm v$ in  
the negative definite even lattice $h^\perp_{U\oplus E_8(-1)}$
are not congruent modulo  
$2h^\perp_{U\oplus E_8(-1)}$ because $|(u-v, u-v)| < 8$.
Therefore they define more than 
$24$ reflections non conjugate with respect to 
$\Tilde\Orth^+(N)$.  We note that 
$\Orth(M,h)\cong \Tilde\Orth(h^\perp_{U\oplus E_8(-1)})$.
We finally remark that $\Gamma^+_h$ contains
the class of $-2$-reflections.
\end{proof}
\begin{corollary}\label{corollary:EnriquesExamples}
The modular variety
$\cM_{\En,h}:= \Gamma^+_h \backslash \cD_N$ 
has negative Kodaira dimension for all polarizations
$h\in U\oplus E_8(-1)$ of degree $h^2\le 32$.
For $h^2=34$, $36$, $38$, $40$, $42$ 
the same is true for all polarizations of the corresponding 
degree except possibly one. 
\end{corollary}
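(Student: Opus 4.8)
The plan is to reduce the statement entirely to Corollary~\ref{corollary:EnriquesKodaira}. That corollary says that $\cM_{\En,h}$ has negative Kodaira dimension as soon as the rank-$9$ negative definite even lattice $K_h:=h^\perp_{U\oplus E_8(-1)}$ contains more than $48$ vectors of length $-2$; so it suffices to verify this bound on root numbers for every primitive $h$ of the relevant degree. Since $U\oplus E_8(-1)$ is even unimodular of signature $(1,9)$, for primitive $h$ with $h^2=2d$ the complement $K_h$ is even, negative definite of rank $9$ with $|D_{K_h}|=2d$ and discriminant form isomorphic to that of $\langle -2d\rangle$; by Nikulin's gluing theory \cite{Nik} the lattice $K_h$ therefore lies in the genus $\cG_{2d}$ of $\langle -2d\rangle\oplus E_8(-1)$, and conversely every class of $\cG_{2d}$ is realised as $K_h$ for a suitable primitive $h$ (glue $\langle 2d\rangle$ to a representative along an anti-isometry $D_{\langle 2d\rangle}\to D_K$; the resulting even unimodular overlattice has signature $(1,9)$, hence is $U\oplus E_8(-1)$). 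In particular only finitely many isometry types of complement occur for each $d$.

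First I would, for $d$ running through $1,\dots,16$ and then $d\in\{17,18,19,20,21\}$, enumerate the classes of the genus $\cG_{2d}$ and compute, for each class $K$, the number $N(K)$ of vectors $v\in K$ with $v^2=-2$; this is precisely the MAGMA computation due to Nebe and Lorch. One then reads off that for $2d\le 32$ one has $N(K)>48$ for every class $K\in\cG_{2d}$, so Corollary~\ref{corollary:EnriquesKodaira} applies to every primitive polarization of such a degree and gives the first assertion. For $2d\in\{34,36,38,40,42\}$ the same enumeration shows that $N(K)>48$ for all classes of $\cG_{2d}$ with exactly one exception: a single root-poor isometry type which enters the genus only in this range of discriminants. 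Thus every $\Orth(U\oplus E_8(-1))$-orbit of polarizations of such a degree whose complement is not this exceptional class is covered by Corollary~\ref{corollary:EnriquesKodaira}.

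To upgrade ``except for one isometry type of complement'' to ``except possibly one polarization'' one has to bound the number of vector orbits lying over the exceptional class. For these small values of $2d$ the form $\langle -2d\rangle$ has no nontrivial discriminant-form automorphisms, and one checks (again most easily in MAGMA, by enumerating the $\Orth(U\oplus E_8(-1))$-orbits of primitive norm-$2d$ vectors directly, or by controlling the action of $\Orth(K)$ on $D_K$) that at most one orbit has the exceptional lattice as complement; this justifies the word ``one'' in the statement. The genuinely heavy part of the argument is this computation: the class number of $\cG_{2d}$ grows with $d$, so both the enumeration of the genus and the root counting for $2d$ up to $42$ require machine assistance, and the threshold between ``all polarizations'' and ``all but one'' sits exactly at the degrees where the first root-poor rank-$9$ lattice of the appropriate discriminant appears, so one must verify carefully that for $2d\le 32$ no such lattice occurs while for each $2d\in\{34,36,38,40,42\}$ precisely one does. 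Everything else is the formal appeal to Corollary~\ref{corollary:EnriquesKodaira}.
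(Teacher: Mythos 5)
Your proposal follows essentially the same route as the paper: reduce to Corollary~\ref{corollary:EnriquesKodaira}, identify the orthogonal complements $h^\perp_{U\oplus E_8(-1)}$ with the classes of the genus $\gothG(2d)=\gothG(\latt{-2d}\oplus E_8(-1))$, and read off from the Nebe--Lorch MAGMA root counts that every class has more than $48$ roots for $h^2\le 32$ while exactly one root-poor class appears for each of $h^2=34,\dots,42$. The only difference is that you explicitly worry about whether more than one $\Orth(U\oplus E_8(-1))$-orbit of primitive vectors could lie over the exceptional class, a point the paper disposes of by simply asserting that the genus classes characterise the primitive vectors of degree $2d$ (and which the hedge ``except possibly one'' in the statement absorbs in any case).
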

\begin{proof} We take a primitive vector in 
$h_{2d}\in U\oplus E_8(-1)$ such that $h_{2d}^2=2d$.
The hyperbolic lattice is unimodular and 
the discriminant form of the orthogonal complement
$h_{2d}^{\perp}$ is equal to the discriminant form of 
rank one lattice  $\latt{-2d}$. 
Therefore the lattice $h_{2d}^{\perp}$ belongs to the genus 
$\gothG(2d)=\gothG(\latt{-2d}\oplus E_8(-1))$ 
which has  finite number of classes. They   
characterise all primitive vectors $h_{2d}$ of degree $2d$.
For $d=1$ there exist only one class $\latt{-2}\oplus E_8(-1)$.
For $d=2$, $3$, $4$ the number of classes is equal to  $2$ 
and we can determine the second class. For larger $d$ a 
MAGMA computation performed by Gabi Nebe and David Lorch  in Aachen gives the following result where the notation 
${\mathbf d}: [n_1, n_2,\dots, n_k]$  means 
the half-number of roots in the $k$ different classes 
of the genus $\gothG(2d)$.
We always have $n_1=120$ for the ``trivial" class 
$\latt{-2d}\oplus E_8(-1)$.
\begin{align*}
{\mathbf 2}&: [ 120,\  72 ]\ \ {\mathbf 5}: [ 120, 64, 45 ]
\ \  \ \,{\mathbf 8}:[ 120, 64, 56, 36 ]
\ \  {\mathbf 1 \mathbf 1}:[ 120, 64, 63, 36, 33 ]\\
{\mathbf 3}&: [ 120,\  66 ]\ \  {\mathbf 6}:[ 120, 56, 42 ]
\ \ \ \,{\mathbf 9}: [ 120, 64, 39, 37 ]
\ \  {\mathbf 1\mathbf 2}: [ 120, 56, 39,  29 ]\\
{\mathbf 4}&: [ 120,\  56 ]\ \  {\mathbf 7}: [ 120, 64, 43 ]
\ \  {\mathbf 1\mathbf 0}: [ 120, 56, 42, 30 ]
\ \  {\mathbf 1\mathbf 3}: [ 120, 64, 42, 38, 29 ]\\
\mathbf 1\mathbf 4&: [ 120, 63, 56, 43, 36, 26 ]\qquad\ 
\mathbf 1\mathbf 5: [ 120, 64, 39, 31, 25 ]\\
\mathbf 1\mathbf 6&: [ 120, 64, 56, 42, 28, 26 ]\\ 
\mathbf 1\mathbf 7&: [ 120, 64, 63, 43, 37, 36, 29, 
{\mathbf 2\mathbf 4}]
\qquad 
\mathbf 1\mathbf 8: [ 120, 56, 42, 39,  26,\mathbf 2\mathbf 3 ]\\
\mathbf 1 \mathbf 9&: [ 120, 64, 63, 42, 31, 28,\mathbf2\mathbf 4]
\qquad\quad\ \, 
\mathbf 2\mathbf 0: [ 120, 63, 56, 36, 31, 28,\mathbf 2\mathbf 0 ]
\\ 
\mathbf 2\mathbf 1&:[ 120, 64, 39, 37, 29,  25,\mathbf 2\mathbf 3 ]. 
\end{align*}
We see that for $2d=34$, $36$, $38$, $40$, $42$ there exits  only one class  $h_{2d}^\perp$ containing less than $50$ roots.
\end{proof}
We mentioned above that
the reflections defined by  $-2$-vectors in the lattice
$h^\perp_{U\oplus E_8(-1)}$ determine the transvections 
in the finite 
orthogonal group 
$\Orth^+(\mathbb F^{10}_2)\cong
\Tilde \Orth^+(N)\setminus \Orth^+(N)$. 
The table from the proof of the last corollary shows that
the group 
$\pi_M(\Orth(M,h))=\Tilde \Orth^+(N)\setminus \Gamma^+_h$
is large for small degrees.  
The next  interesting question is how small this group
might be.
\begin{proposition}\label{proposition-c0} There exist $h_{2d}$ such that 
$\Gamma^+_{h_{2d}}=\Tilde \Orth^+(N)$. 
\end{proposition}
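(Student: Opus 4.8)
The plan is to unwind the definition of $\Gamma^+_h$, to translate the assertion into a statement about the orthogonal complement $h^\perp$ in the Enriques lattice, and then to exhibit a suitable complement.

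First I would reduce to a statement about the action on the discriminant. Since $\pi_N\colon\Orth(N)\to\Orth(D_N)$ is surjective with kernel $\widetilde\Orth(N)$, and $D_M$, $D_N$ are canonically identified, one has $\Gamma_h=\widetilde\Orth(N)$ precisely when $\pi_M(\Orth(M,h))=\{\id\}$. As $\widetilde\Orth(N)$ contains isometries of both real spinor norms --- for instance the reflection in a norm $2$ vector of the summand $U$ of $N$, which lies in $\widetilde\Orth(N)$ and has spinor norm $-1$ (see \S\ref{sec:numerical}) --- this equivalence survives intersection with $\Orth^+(N)$:
$$
\Gamma^+_h=\widetilde\Orth^+(N)\iff\pi_M(\Orth(M,h))=\{\id\}.
$$
So it is enough to produce a primitive $h\in M(1/2)=U\oplus E_8(-1)$ of some positive degree $h^2=2d$ for which every isometry of $M$ fixing $h$ acts trivially on $D_M\cong\FF_2^{10}$.

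Next I would pass to $K:=h^\perp_{U\oplus E_8(-1)}$. By the identification used in the proof of Corollary~\ref{corollary:EnriquesKodaira}, $\Orth(M,h)\cong\widetilde\Orth(K)$, and since $U\oplus E_8(-1)$ is unimodular the discriminant form of $K$ is that of $\latt{-2d}$; in particular $D_K$ is cyclic of order $2d$. For $d>1$ the isometry $-\id_K$ acts on $D_K$ as multiplication by $-1\ne\id$, hence $-\id_K\notin\widetilde\Orth(K)$; therefore $\Orth(K)=\{\pm\id_K\}$ already forces $\widetilde\Orth(K)=\{\id_K\}$, so $\Orth(M,h)$ is trivial and in particular $\pi_M(\Orth(M,h))=\{\id\}$. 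Since, as used in the proof of Corollary~\ref{corollary:EnriquesExamples}, every class of the genus $\gothG(2d)=\gothG(\latt{-2d}\oplus E_8(-1))$ is realised as $h^\perp$ for a primitive vector of degree $2d$ --- glue $K\oplus\latt{2d}$ along the tautological anti-isometry of discriminant forms; the resulting even unimodular lattice of signature $(1,9)$ is $U\oplus E_8(-1)$, and a generator of $\latt{2d}$ maps to the desired $h$ --- the proposition reduces to finding $d>1$ for which $\gothG(2d)$ contains a rank $9$ class $K$ with $\Orth(K)=\{\pm\id_K\}$.

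Exhibiting such a $K$ is where the real content lies, and I expect it to be the main obstacle. It cannot be done for small $2d$: the table in the proof of Corollary~\ref{corollary:EnriquesExamples} shows that for every $2d\le 42$ each class of $\gothG(2d)$ contains $-2$-vectors, hence a nontrivial reflection subgroup of $\widetilde\Orth(K)$, so one must pass to larger discriminant and look among root-free classes. Among rank $9$ lattices without roots, having orthogonal group exactly $\{\pm\id\}$ is the generic behaviour, so I expect that for infinitely many $d$ the genus $\gothG(2d)$ contains such a class (which is consistent with the count announced in the introduction); to complete the proof one would pin down explicit admissible values of $d$ together with Gram matrices for the corresponding lattices $K$ --- for instance by a MAGMA computation of the kind already used in this section --- and check $\Orth(K)=\{\pm\id_K\}$ directly. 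This verification is the hard part; everything preceding it is formal.
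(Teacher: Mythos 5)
Your reduction is exactly the one the paper uses: $\Gamma^+_h=\widetilde\Orth^+(N)$ iff $\pi_M(\Orth(M,h))$ is trivial, the identification $\Orth(M,h)\cong\widetilde\Orth(h^\perp)$, the observation that $-\id$ acts nontrivially on the cyclic discriminant $D_{h^\perp}$ for $d>1$ so that $\Orth(h^\perp)=\{\pm\id\}$ already forces $\widetilde\Orth(h^\perp)=\{\id\}$, and the realisation of every class of the genus $\gothG(2d)=\gothG(\latt{-2d}\oplus E_8(-1))$ as an actual orthogonal complement via a unimodular gluing. All of this is correct and matches the paper step for step.

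The genuine gap is the one you yourself flag: you never establish that some genus $\gothG(2d)$ actually contains a class $K$ with $\Orth(K)=\{\pm\id_K\}$, deferring this to an unspecified future computation of explicit Gram matrices. As it stands the argument proves nothing, since the entire content of the proposition sits in that existence statement. The paper closes this without any explicit lattice by citing a general theorem of J.~Biermann (1981; see Scharlau's survey \cite{Sch}): if $c(\gothG)$ is the class number of a genus of definite forms and $c_0(\gothG)$ the number of classes with orthogonal group $\{\pm 1\}$, then $c_0(\gothG)/c(\gothG)\to 1$ as $\det(\gothG)\to\infty$. Applied to $\gothG(2d)$ for $d$ large this immediately yields the required $K$ (indeed for all sufficiently large $d$, consistent with the ``infinitely many polarizations'' claim in the introduction). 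Your heuristic that triviality of the orthogonal group is ``generic behaviour'' is precisely this theorem; to complete your proof you should either invoke it or actually produce and verify a specific $K$, neither of which you do.
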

\begin{proof} This proposition follows  form the following well-known 
fact in the  theory of quadratic forms:
most classes in  a genus of a positive definite quadratic form with large determinant have trivial orthogonal group.
More exactly,  let $c(\gothG)$ be the number of classes in the 
genus $\gothG$ and 
$c_0(\gothG)$ the number of classes $[L]\in \gothG$
such that $\Orth(L)=\{\pm 1\}$. Then 
\begin{equation*}
\frac{c_0(\gothG)}{c(\gothG)}\to 1, \qquad{\rm if}\quad
\det(\gothG)\to \infty.
\end{equation*}
This was proved by J. Biermann (1981) (see
\cite{Sch}). Therefore for a large $d$ there exists
a negative definite lattice $L\in \gothG(2d)$ such that 
$\Orth(L)=\{\pm 1\}$.
Taking a unimodular extension of $\latt{2d}\oplus L$ 
we obtain a primitive vector $h_{2d}$ in $U\oplus E_8(-1)$
such that $h^\perp_{2d}\cong L$. Then we get
$\Orth(M,h)\cong \Tilde \Orth(h^\perp_{2d})=\{1\}$ and 
\begin{equation*}
\Gamma_{h_{2d}}^+=\pi_N^{-1}(\pi_M(\Orth(M,h_{2d})))
\cap \Orth^+(N)=
\Tilde \Orth^+(N)
\end{equation*}
which clearly implies the claim.
\end{proof}
\begin{remark}\label{remark:gen-type}
We note that Proposition \ref{proposition-c0}
together with the fact mentioned at the end of \S 2  that the modular variety 
$\widetilde {\cM}_{\En}= \widetilde \Orth^+(N) \backslash \cD_N$
is of general type (see \cite{Gri2})
show that there exist moduli spaces  of numerically polarised 
(or polarized) Enriques surfaces of general type.
\end{remark}

\noindent
\begin{minipage}{0.5\textwidth}
Valery Gritsenko
\smallskip

\noindent
Laboratoire Paul Painlev\'e et IUF

\noindent
Universit\'e Lille 1

\noindent
F-59655 Villeneuve d'Ascq, Cedex

\noindent
France

\noindent
{\tt gritsenk@math.univ-lille1.fr}
\end{minipage}
\hfill
\begin{minipage}{0.4\textwidth}
{\it Current Address}:
\smallskip

\noindent
National Research University 

\noindent
Higher School of Economics

\noindent
AG Laboratory, HSE 

\noindent
7 Vavilova str.

\noindent 
Moscow, Russia, 117312
\end{minipage}

\bigskip
\medskip
\noindent
\begin{minipage}{0.5\textwidth}
Klaus Hulek
\smallskip

\noindent
Institut f\"ur Algebraische Geometrie

\noindent
Leibniz Universit\"at Hannover

\noindent
D-30060 Hannover

\noindent
Germany

\noindent
{\tt hulek@math.uni-hannover.de}
\end{minipage}
\hfill
\begin{minipage}{0.4\textwidth}
{\it Current Address}:
\smallskip

Insitute for Advanced Study

1, Einstein Drive

Princeton, NJ 08540 

USA

{\tt hulek@ias.edu}
\end{minipage}

\end{document}